\documentclass[10pt]{amsart}
\usepackage{amssymb}
\usepackage{epsfig}
\usepackage{url}
\usepackage{setspace}
\usepackage{color}
\theoremstyle{plain}


\newtheorem{thm}{Theorem}[section]
\newtheorem{cor}[thm]{Corollary}
\newtheorem{lem}[thm]{Lemma}

\newtheorem{rem}[thm]{Remark}
\newtheorem{ques}[thm]{Question}
\newtheorem{conj}[thm]{Conjecture}

\def\bbb{\mathbb}
\def\op{\operatorname}
\renewcommand{\phi}{\varphi}
\newcommand{\R}{\bbb{R}}
\newcommand{\N}{\bbb{N}}
\newcommand{\Z}{\bbb{Z}}
\newcommand{\Q}{\bbb{Q}}

\newcommand{\eps}{\varepsilon}

\begin{document}
\title[Strong arithmetic property of certain Stern polynomials]{Strong arithmetic property of certain Stern polynomials}
\author{Maciej Ulas}

\keywords{the Stern sequence, the Stern polynomials, congruences, numerical computations} \subjclass[2010]{11P81, 11P83}

\begin{abstract}
Let $B_{n}(t)$ be the $n$th Stern polynomial, i.e., the $n$th term of the sequence defined recursively as $B_{0}(t)=0, B_{1}(t)=1$ and $B_{2n}(t)=tB_{n}(t), B_{2n+1}(t)=B_{n}(t)+B_{n-1}(t)$ for $n\in\N$. It is well know that $i$th coefficient in the polynomial $B_{n}(t)$ counts the number of hyperbinary representations of $n-1$ containing exactly $i$ digits 1. In this note we investigate the existence of odd solutions of the congruence
\begin{equation*}
B_{n}(t)\equiv 1+rt\frac{t^{e(n)}-1}{t-1}\pmod{m},
\end{equation*}
where $m\in\N_{\geq 2}$ and $r\in\{0,\ldots,m-1\}$ are fixed and $e(n)=\op{deg}B_{n}(t)$. We prove that for $m=2$ and $r\in\{0,1\}$ and for $m=3$ and $r=0$, there are infinitely many odd numbers $n$ satisfying the above congruence. We also present results of some numerical computations.
\end{abstract}

\maketitle

\section{Introduction}\label{sec1}

Let $\N=\{0,1,\ldots, \}, \N_{+}=\{1,2,\ldots,\}$ and for $k\geq 2$ we put $\N_{\geq k}:=\{k,k+1,\ldots\}$.

Let $n\in\N$ and recall that a representation of the form
\begin{equation*}\label{hyperbin}
n=\sum_{i=0}^{k}\eps_{i}2^{i},
\end{equation*}
where $\eps_{i}\in\{0,1,2\}$ is called {\it a hyperbinary representation of $n$}. The sequence $(s_{n})_{n\in\N}$ counting all hyperbinary representations of $n$ is called {\it the Stern sequence} or the Stern diatomic sequence \cite{Ste}. One can check that the Stern sequence satisfies the recurrence relation(s)
\begin{equation*}
s_{0}=0, s_{1}=1,\quad s_{2n}=s_{n},\quad s_{2n+1}=s_{n}+s_{n+1}
\end{equation*}
for $n\in\N_{+}$. The first 32 elements of the Stern sequence are as follows
\begin{equation*}
0, 1, 1, 2, 1, 3, 2, 3, 1, 4, 3, 5, 2, 5, 3, 4, 1, 5, 4, 7, 3, 8, 5, 7, 2, 7, 5, 8, 3, 7, 4, 5, \ldots.
\end{equation*}
Among many interesting properties of the Stern sequences is the one given by Calkin and Wilf which says that the sequence $(s_{n}/s_{n+1})_{n\in\N_{+}}$ contains each positive rational number exactly once \cite{CalWil}. Many other results concerning $(s_{n})_{n\in\N}$ can be found in the classical paper of Lehmer \cite{Leh} and more recent paper of Urbiha \cite{Ur}.

The concept of the Stern sequence was generalized in many directions. The polynomial generalization was proposed by Klav\v{z}ar, Milutinovi\'{c} and Petr in \cite{KMP}. More precisely, they define the sequence $(B_{n}(t))_{n\in\N}$ of {\it the Stern polynomials} in the following way: $B_{0}(t)=0, B_{1}(t)=1$ and for $n\in\N_{+}$ we have
\begin{equation*}
B_{2n}(t)=tB_{n}(t),\quad B_{2n+1}(t)=B_{n}(t)+B_{n+1}(t).
\end{equation*}
We have $B_{n}(1)=s_{n}$ and one can easily check by induction on $n$ that $B_{n}(2)=n$. Moreover, there is a striking combinatorial property of $B_{n}(t)$ connecting its coefficients with the number of (certain) hyperbinary representations of $n-1$. More precisely, if we put $e(n)=\op{deg}B_{n}(t)$ and write
$$
B_{n}(t)=\sum_{i=0}^{e(n)}a(i,n-1)t^{i},
$$
then $a(i,n-1)$ is the number of hyperbinary representations of $n-1$ containing exactly $i$ digits 1. Let us note that a combinatorial interpretation of the polynomial $B_{n}(t)$ can be seen from the shape of the ordinary generating function
$$
\sum_{n=0}^{\infty}B_{n}(t)x^{n}=x\prod_{n=0}^{\infty}\left(1+tx^{2^{n}}+x^{2^{n+1}}\right).
$$

Let us also note that the sequence $(e(n))_{n\in\N_{+}}$ of degrees of the Stern polynomials has also interesting properties (some are noted in \cite{KMP} and many others are presented in \cite{Ul2}).

The paper of Klav\v{z}ar, Milutinovi\'{c} and Petr motivated a lot of research devoted to various properties of Stern polynomials (see for example \cite{Ul1, Ul2, Sch, Sch1, Sch2, Gaw, DT, DKT} and reference given therein). However, it seems that only the paper \cite{Sch2} deals with the partition properties of the coefficients of $B_{n}(t)$. More precisely, the computation of $a(e(n),n-1)$ in terms of the (unique) binary expansion of $n$ is presented \cite[Theorem 1]{Sch2}. This is an interesting result which gives value of the number of hyperbinary representations of $n-1$ with maximal possible number of 1's. As a corollary, Schinzel obtained a striking result which says that for any given $m\in\N_{\geq 2}$, the density of these $n\in\N_{+}$ such that $a(e(n),n-1)\equiv 0\pmod{m}$ is equal to 1 (and thus $\limsup_{n\rightarrow+\infty}a(e(n),n-1)=+\infty$). The cited result motivated our research. However, instead of trying to obtain congruences properties for the individual coefficient $a(i,n-1)$, we study the more tractable problem of finding the numbers $n$ for which {\it all} coefficients $a(i,n-1), 1\leq i\leq e(n)$, are congruent to some fixed value $r\pmod{m}$. This is equivalent to finding solutions of the polynomial congruence
\begin{equation}\label{maincong}
B_{n}(t)\equiv 1+r\sum_{i=1}^{e(n)}t^{i}\pmod{m},
\end{equation}
where $m\in\N_{\geq 2}, r\in\{0,\ldots, m-1\}$ are fixed. The existence of a solution of (\ref{maincong}) for given $m$ and $r$ has strong combinatorial interpretation. Indeed, let $n\in\N_{+}$ be odd and write
$$
B_{n}(t)=1+\sum_{i=1}^{e(n)}a(i,n-1)t^{i}.
$$
Then, if (\ref{maincong}) holds this means that
$$
a(1,n-1)\equiv a(2,n-1)\equiv\ldots\equiv a(e(n),n-1)\equiv r\pmod{m}.
$$
This is a strong property and a question arises whether for a given pair $(r, m)$ it is possible to find a solution of (\ref{maincong}). In fact, we are interested in finding, for a given pair $(r,m)$ an infinite family of solutions. However, we are interested in non-trivial families only. More precisely, by a non-trivial family of solutions of (\ref{maincong}) we understand a sequence $(a_{n})_{n\in\N}$ of positive odd integers, such that the set of coefficients of all polynomials $B_{a_{n}}(t), n\in\N$, is infinite. This assumption is quite strong but we believe that that it is reasonable due to the fact that for a given odd $k$, the set of coefficients of all polynomials $(B_{2^{n}-k}(t))_{n\in\N\geq \lfloor\log_{2} k\rfloor+1}$ is finite. Indeed, this is a consequence of the identity
\begin{equation*}\label{eq1}
B_{2^{n}-k}(t)=B_{k}(t)\frac{t^{n-m}-1}{t-1}-B_{l}(t)t^{n-m},
\end{equation*}
where $k=2^{m}+l$, obtained in a recent paper of Dilcher, Kidwai and Tomkins \cite{DKT}. In particular, the number of different coefficients of all polynomials $B_{2^{n}-k}(t)$ is bounded by $k+l$, and thus the family $(B_{2^{n}-k}(t))_{n\in\N\geq \lfloor\log_{2} k\rfloor+1}$ is trivial for all $k$. For given $m\geq 3$ and $r\in\{1,2\}$ one can also obtain trivial families of solutions of (\ref{maincong}). Indeed, if $r=1$, then for $n\in\N$ we have the identity
$$
B_{2^{n+1}-1}(t)=\sum_{i=0}^{n}t^{i},
$$
and the congruence (\ref{maincong}) is trivially satisfied. Similarly, if $r=2$ we have the identity
$$
B_{2^{n+2}-3}(t)=1+2\sum_{i=1}^{n}t^{i},
$$
and thus for any $m\in\N_{\geq 3}$ there are infinitely many solutions of (\ref{maincong}). The existence of examples of this kind is the reason why we are interested in non-trivial families only. Thus, in the sequel, by a family of solutions of (\ref{maincong}) we mean a non-trivial family.

After the above discussion let us describe the content of the paper in some detail. In Section \ref{sec2} we prove the existence of an infinite family of non-trivial sequences $(p_{k,n})_{n\in\N_{+}}, k\in\N_{\geq 2}$, of solutions of the congruence (\ref{maincong}) for $m=2$ and $r=0$. Moreover, we present a lower bound for the number of solutions. In case of $m=2$ and $r=1$ we prove the existence of a four non-trivial sequence of solutions. We also present results of our numerical computations.

In Section \ref{sec3} we deal with (\ref{maincong}) for $m=3$. Our main result states that there is a non-trivial sequence of solutions corresponding to $r=0$.

Finally, in Section \ref{sec4} we present some examples of solutions of (\ref{maincong}) for certain values $m\in\{4,\ldots, 10\}, r\in\{0,\ldots,m-1\}$ and formulate some additional questions and conjectures.


\section{The case of $m=2$}\label{sec2}

Before we prove the main result of this section let us recall the following useful identities obtained by Schinzel in \cite{Sch} (see also a note in  \cite[Lemma 2.1]{DT}).

\begin{lem}\label{lem1}
For all nonnegative integers $a, m$, and $r$ with $0\leq r\leq 2^{a}$ we have
\begin{align*}
B_{m2^{a}+r}(t)&=B_{2^{a}-r}(t)B_{m}(t)+B_{r}(t)B_{m+1}(t),\\
B_{m2^{a}-r}(t)&=B_{2^{a}-r}(t)B_{m}(t)+B_{r}(t)B_{m-1}(t).
\end{align*}
\end{lem}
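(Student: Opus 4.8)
The two identities are established simultaneously by induction on $m$. The plan is to first dispose of the base cases $m=0$ and $m=1$ by direct substitution, and then to run the inductive step by splitting into the cases $m$ even and $m$ odd, using in each case the defining recurrences $B_{2n}(t)=tB_n(t)$ and $B_{2n+1}(t)=B_n(t)+B_{n+1}(t)$ together with the inductive hypothesis applied with a smaller first argument (namely $\lfloor m/2\rfloor$ or nearby).

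For the base cases: when $m=0$ the first identity reads $B_r(t)=B_{2^a-r}(t)\cdot 0 + B_r(t)\cdot B_1(t)=B_r(t)$, using $B_1(t)=1$, and similarly for the second one recalling $B_{-1}$ is not needed since $m=0$ forces $r=0$ there or one starts the second identity at $m=1$. When $m=1$ the first identity asserts $B_{2^a+r}(t)=B_{2^a-r}(t)+B_r(t)B_2(t)=B_{2^a-r}(t)+tB_r(t)$, which one checks against Lemma-type expansions or proves by a short secondary induction on $a$; the second identity at $m=1$ is $B_{2^a-r}(t)=B_{2^a-r}(t)B_1(t)+B_r(t)B_0(t)=B_{2^a-r}(t)$, which is immediate.

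For the inductive step, suppose both identities hold for all indices less than $m$. Write $N=m2^a+r$. If $m=2m'$ is even, then $N=m'2^{a+1}+r$, and since $0\le r\le 2^a\le 2^{a+1}$ we may apply the first identity (inductive hypothesis, with exponent $a+1$ and first argument $m'<m$) to rewrite $B_N(t)$; alternatively, if $r\le 2^a$ one can instead use $B_{2n}=tB_n$ after regrouping. If $m=2m'+1$ is odd, then $N=m2^a+r$ with $m$ odd is itself odd precisely when $a=0$, so for $a\ge1$ one has $N=2(m'2^a)+ (2^a+r)$ — here the extra term $2^a$ must be absorbed, and the cleanest route is to write $N = (2m'+1)2^a+r = m'2^{a+1} + (2^a+r)$ and apply the hypothesis with $0\le 2^a+r\le 2^{a+1}$, expressing $B_{2^{a+1}-(2^a+r)}=B_{2^a-r}$ and $B_{2^a+r}$ via the $m=1$ case already proved. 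Collecting terms and using $B_{2n+1}=B_n+B_{n+1}$ to recombine $B_{m'}$ and $B_{m'+1}$ back into $B_m=B_{2m'+1}$ yields the claimed formula; the second identity is handled the same way with $N=m2^a-r$ and the roles of $B_{m-1}$, $B_{m+1}$ interchanged.

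The main obstacle is purely bookkeeping: making sure the auxiliary parameter stays in the allowed range $0\le r\le 2^a$ after each regrouping (in particular the shift $r\mapsto 2^a+r$ when passing from exponent $a$ to $a+1$ in the odd case), and correctly tracking which of $B_{m-1},B_m,B_{m+1}$ appears so that the two identities feed into each other consistently. Since both identities are needed in the induction — the first one produces a term $B_{m+1}$ that one re-expands via the second, and vice versa — they must be proved as a single coupled statement rather than separately. I expect no analytic difficulty; the entire proof is a careful case analysis on the parities of $m$ and on whether $a=0$ or $a\ge 1$.
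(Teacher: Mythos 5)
The paper does not actually prove this lemma: it is quoted from Schinzel \cite{Sch} (see also \cite[Lemma 2.1]{DT}), so there is no in-paper argument to compare against, and your proposal supplies a proof where the paper supplies a citation. Your double induction is sound, and I checked that the two recombination steps really close: in the even case $m=2m'$ one gets $B_{2^{a+1}-r}B_{m'}+B_rB_{m'+1}$ from the hypothesis at level $(m',a+1)$, and the identity $B_{2^{a+1}-r}=tB_{2^a-r}+B_r$ (the $m=1$ instance applied to $2^a+(2^a-r)$) together with $B_{2m'}=tB_{m'}$, $B_{2m'+1}=B_{m'}+B_{m'+1}$ gives exactly the claimed right-hand side; in the odd case the shift $r\mapsto 2^a+r$ (resp.\ $r\mapsto 2^a-r$ for the second identity) stays in $[0,2^{a+1}]$ and the same $m=1$ instance recombines the terms, with the second identity's odd step genuinely calling the first identity, as you anticipated. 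The real content is thus the $m=1$ case $B_{2^a+r}=B_{2^a-r}+tB_r$, which you correctly flag and which does go through by a secondary induction on $a$ split on the parity of $r$; you should write that sub-lemma out, since everything else hangs on it. Two small repairs: the second identity must exclude $m=0$ (or be read with the convention that the term $B_r(t)B_{m-1}(t)$ vanishes because $r=0$ forces $B_r=0$ there), and your aside that ``$N=m2^a+r$ with $m$ odd is itself odd precisely when $a=0$'' is false (for $a\ge 1$ the parity of $N$ is that of $r$) — but it plays no role in the argument you actually run. A marginally leaner organization is a single induction on $a$ with both identities proved simultaneously for all $m$ at once, since the step from $a$ to $a+1$ only uses the defining recurrences and the level-$a$ statement; but your induction on $m$ is equally valid.
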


Let us also note the following identities which will be used in the sequel.

\begin{lem}\label{lem2}
We have the following identities
\begin{align*}
B_{2^{n}-1}(t)&=\frac{t^{n}-1}{t-1}, n\in\N_{+}\\
B_{2^{n}-3}(t)&=t\frac{t^{n-2}-1}{t-1}+\frac{t^{n-1}-1}{t-1}, n\in\N_{\geq 2},\\
B_{2^{n}-5}(t)&=t\frac{t^{n-3}-1}{t-1}+(t+1)\frac{t^{n-2}-1}{t-1}, n\in\N_{\geq 3},\\
B_{2^{n}-9}(t)&=t\frac{t^{n-4}-1}{t-1}+(t^2+t+1)\frac{t^{n-3}-1}{t-1}, n\in\N_{\geq 4}.
\end{align*}
\end{lem}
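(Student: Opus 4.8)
The plan is to prove each of the four identities in Lemma~\ref{lem2} by specializing Lemma~\ref{lem1} (the second identity, the $B_{m2^a-r}(t)$ one) with a convenient choice of $a$, $m$, $r$, and then performing an induction on $n$. First I would establish $B_{2^n-1}(t)=\frac{t^n-1}{t-1}$, since it is both the base case in spirit and a building block for the others: one can do this by a direct induction on $n$ using the defining recurrences. From $2^{n+1}-1=2(2^n-1)+1$ we get $B_{2^{n+1}-1}(t)=B_{2^n-1}(t)+B_{2^n}(t)=B_{2^n-1}(t)+tB_{2^{n-1}}(t)=B_{2^n-1}(t)+tB_{2^n-1}(t)\cdot\frac{t-1}{t^{n}-1}\cdot\frac{t^n-1}{t-1}$; more cleanly, $B_{2^{n+1}-2}(t)=tB_{2^n-1}(t)$ and $B_{2^{n+1}-1}(t)=B_{2^n-1}(t)+B_{2^n}(t)$, and $B_{2^n}(t)=tB_{2^{n-1}}(t)=\dots$, which telescopes to the geometric sum. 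Checking $n=1$ gives $B_1(t)=1=\frac{t-1}{t-1}$, as required.

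Next, for the remaining three identities I would apply Lemma~\ref{lem1} with $m=2^{n-k}$ (for $k=2,3,4$ respectively) in a form that reduces $B_{2^n-r}(t)$ to an expression in $B_{2^{n-k}}(t)$, $B_{2^{n-k}-1}(t)$, and the small-index polynomials $B_r(t)$, $B_{2^k-r}(t)$. Concretely, writing $2^n-r = 2^{n-k}\cdot 2^k - r$ and using the second identity of Lemma~\ref{lem1} with $a=n-k$, $m=2^{n-k}$ — wait, that does not match the shape; instead I would write $2^n - r$ in the form $M\cdot 2^a - r$ with $a$ chosen so that $0\le r\le 2^a$ and $M$ a power of two, e.g. $2^n-3 = 2^{n-1}\cdot 2 - 3$ is invalid since $3>2^1$, so take $2^n-3=2^{n-2}\cdot 4 - 3$ with $a=2$, $M=2^{n-2}$: then $B_{2^n-3}(t)=B_{4-3}(t)B_{2^{n-2}}(t)+B_3(t)B_{2^{n-2}-1}(t)=B_1(t)\cdot tB_{2^{n-3}}(t)+B_3(t)B_{2^{n-2}-1}(t)$. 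Hmm, this still has $B_{2^{n-3}}$ floating around, so the cleaner route is induction: assume the formula for $n$ and push to $n+1$ via $B_{2^{n+1}-3}(t)$. Using $2^{n+1}-3 = 2(2^n-1)-1 = 2(2^n-2)+1$, one has $B_{2^{n+1}-3}(t)=B_{2^n-2}(t)+B_{2^n-1}(t) = tB_{2^{n-1}-1}(t)+B_{2^n-1}(t)$; now substitute the already-proved formula for $B_{2^n-1}(t)$ and the inductive hypothesis (or the $n\mapsto n-1$ case of the very formula being proved) for $B_{2^{n-1}-1}(t)$ — but $2^{n-1}-1$ is of the first type, so actually $B_{2^{n+1}-3}(t)=t\cdot\frac{t^{n-1}-1}{t-1}+\frac{t^{n+1}-1}{t-1}$, wait that gives $t\frac{t^{(n+1)-2}-1}{t-1}+\frac{t^{(n+1)-1}-1}{t-1}$, exactly the claimed shape. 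So the $B_{2^n-3}$ identity follows immediately from the $B_{2^n-1}$ identity with no separate induction at all.

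For $B_{2^n-5}(t)$ and $B_{2^n-9}(t)$ I would proceed the same way by peeling off factors of $2$: $2^n-5=2(2^{n-1}-3)+1$ gives $B_{2^n-5}(t)=B_{2^{n-1}-3}(t)+B_{2^{n-1}-2}(t)=B_{2^{n-1}-3}(t)+tB_{2^{n-2}-1}(t)$, and then substitute the (already-established) formulas for $B_{2^{n-1}-3}(t)$ and $B_{2^{n-2}-1}(t)$ and simplify the resulting combination of geometric sums to the stated closed form $t\frac{t^{n-3}-1}{t-1}+(t+1)\frac{t^{n-2}-1}{t-1}$. Likewise $2^n-9=2(2^{n-1}-5)+1$ reduces $B_{2^n-9}(t)$ to $B_{2^{n-1}-5}(t)+tB_{2^{n-2}-1}(t)$, and substituting the $B_{2^{n-1}-5}$ formula plus the $B_{2^{n-2}-1}$ formula and collecting terms yields $t\frac{t^{n-4}-1}{t-1}+(t^2+t+1)\frac{t^{n-3}-1}{t-1}$. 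In each case one must also verify the base case (the smallest admissible $n$: $n=3$ for the $-5$ identity, $n=4$ for the $-9$ identity) by direct computation from the recurrence, e.g. $B_3(t)=B_1(t)+B_2(t)=1+t$ and $B_7(t)=B_3(t)+B_4(t)=1+t+t^2$, matching $B_{2^3-5}=B_3$ and a short computation for $B_{2^4-9}=B_7$. The only mild obstacle is bookkeeping: making sure the index ranges ($n\ge 2,3,4$) are respected so that every $B_{2^{j}-1}$ invoked has $j\ge 1$, and that the algebraic simplification of sums like $\frac{t^{n-3}-1}{t-1}+\frac{t^{n-2}-1}{t-1}$ versus $(t+1)\frac{t^{n-2}-1}{t-1}$ is done correctly — these are routine but must line up exactly. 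No genuine difficulty is expected; the whole lemma is a cascade of one-step recurrence applications seeded by the first identity.
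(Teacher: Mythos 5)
The paper does not actually prove Lemma \ref{lem2}: the identities are simply stated (``which will be used in the sequel''), so there is no argument to compare against. Your verification strategy --- first establish $B_{2^{n}-1}(t)=\frac{t^{n}-1}{t-1}$ by induction from $B_{2^{n+1}-1}(t)=B_{2^{n}-1}(t)+B_{2^{n}}(t)$ with $B_{2^{n}}(t)=t^{n}$, then derive each remaining identity in a single step from the previously established ones via $B_{2m+1}(t)=B_{m}(t)+B_{m+1}(t)$ and $B_{2m}(t)=tB_{m}(t)$ --- is correct and is surely the intended justification; as you eventually notice, Lemma \ref{lem1} is not needed at all, and no genuine induction is required beyond the first identity.

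One intermediate computation is wrong as written: in the $2^{n}-9$ case, the even-indexed companion of $B_{2^{n-1}-5}(t)$ is $B_{2^{n-1}-4}(t)=tB_{2^{n-2}-2}(t)=t^{2}B_{2^{n-3}-1}(t)=t^{2}\frac{t^{n-3}-1}{t-1}$, not $tB_{2^{n-2}-1}(t)=t\frac{t^{n-2}-1}{t-1}$; since $t\frac{t^{n-2}-1}{t-1}=t^{2}\frac{t^{n-3}-1}{t-1}+t$, carrying your expression through literally would give the stated polynomial plus an extra $t$. With the corrected term the sum is $t\frac{t^{n-4}-1}{t-1}+(t+1)\frac{t^{n-3}-1}{t-1}+t^{2}\frac{t^{n-3}-1}{t-1}$, which is exactly the claimed $t\frac{t^{n-4}-1}{t-1}+(t^{2}+t+1)\frac{t^{n-3}-1}{t-1}$. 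Everything else --- the $-3$ and $-5$ reductions, the base cases, and the index ranges --- checks out.
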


Now we are ready to prove the result which says that for $m=2$ and fixed $r=0$ there is an infinite family of sequences $(p_{k,n})_{n\in\N_{+}}$ such that for each $k\in\N_{\geq 2}$ the corresponding sequence gives a non-trivial family of solutions of (\ref{maincong}). However, before we present the statement of the result we are talking about, it is better to explain how the family $(p_{k,n})_{n\in\N_{+}}$ was found.

For $m\in\N_{\geq 2}$ and $r\in\{0,\ldots, m-1\}$ let us put
$$
\Pi_{r,m}(x):=|\{n\leq x:\;\mbox{the congruence}\;(\ref{maincong})\;\mbox{holds for}\;n\}|.
$$

First, we found all odd solutions of (\ref{maincong}) satisfying the condition $n\leq 2^{26}$. In the table below we present the number $\Pi_{r,m}(2^{k})$ of solutions of (\ref{maincong}) for $(r,m)\in\{(0,2), (1,2)\}$ and $k\in\{15,\ldots, 26\}$. Moreover, we present the graph of the function $\Pi_{r,2}(x)$ (Figure 1) together with the graph of the quotient $\Pi_{0,2}(x)/\Pi_{1,2}(x)$ (Figure 2) in the range $0<x\leq 2^{20}$.

\begin{equation*}
\begin{array}{|c|llllllllllll|}
\hline
 k                & 15 & 16 & 17 & 18 & 19 & 20 & 21 & 22 & 23 & 24 & 25 & 26 \\
 \hline
 \Pi_{0,2}(2^{k}) & 97 & 136 & 185 & 253 & 339 & 453 & 609 & 819 & 1121 & 1519 & 2057 & 2841 \\
 \hline
 \Pi_{1,2}(2^{k}) & 82 & 115 & 146 & 182 & 217 & 258 & 311 & 371 & 441  & 522  & 629  & 783 \\
 \hline
\end{array}
\end{equation*}
\begin{center}
Table 1. The number $\Pi_{r,m}(2^{k})$ of solutions of (\ref{maincong}) for $(r,m)\in\{(0,2), (1,2)\}$ and $k\in\{15,\ldots, 26\}$.
\end{center}

\begin{figure}[htbp]\label{Pic1} 
       \centering
         \includegraphics[width=3.5in]{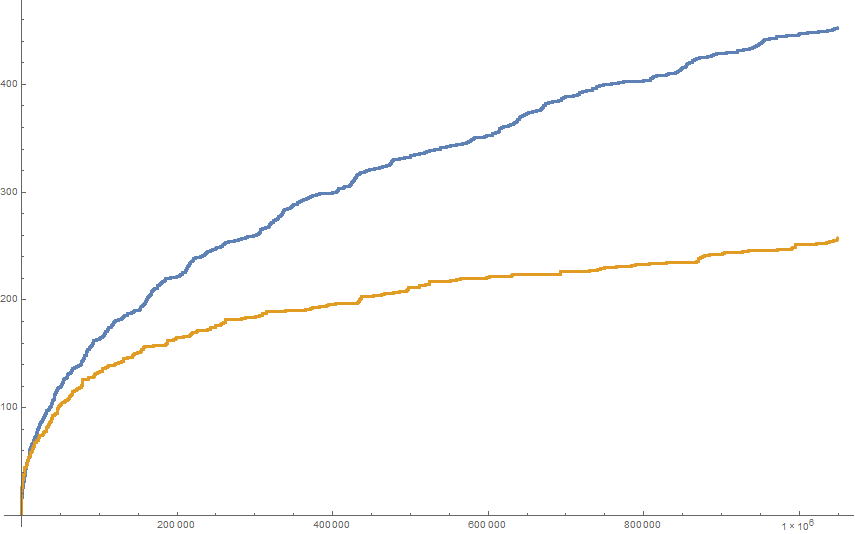}
        \caption{Plot of the function $\protect \Pi_{0,2}(x)$ (up) and $\protect \Pi_{1,2}(x)$ (down) for $\protect x\leq 2^{20}$}
       \label{fig:disc}
    \end{figure}

\begin{figure}[htbp]\label{Pic2} 
      \centering
         \includegraphics[width=3.5in]{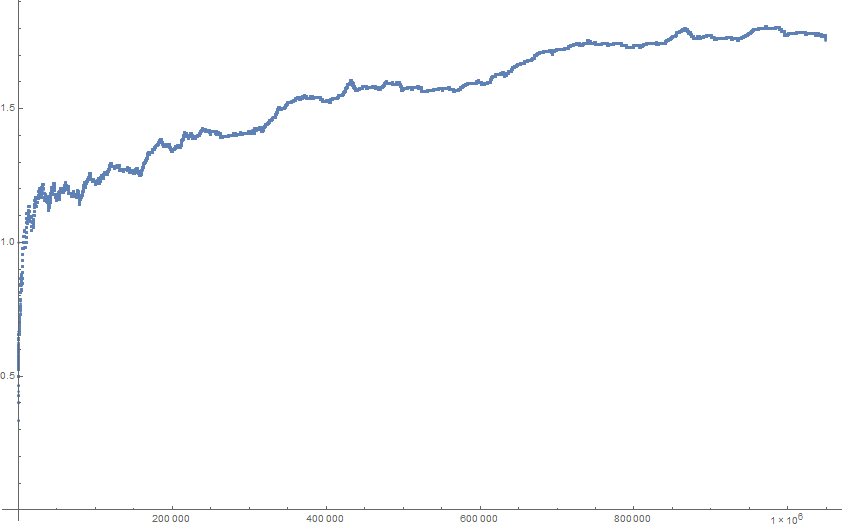}
        \caption{Plot of the function $\protect \Pi_{0,2}(x)/\Pi_{1,2}(x)$ for $\protect 5\leq x\leq 2^{20}$}
       \label{fig:disc}
    \end{figure}

Having a large table of solutions (containing also the solutions which are not interesting from our point of view) we proceed in the following way. We consider the set, say $A_{0,2}$, of solutions which are $\leq 4\cdot 10^4$. There are exactly 106 elements in the set $A_{0,2}$. We expected that there should be a family of solutions of the form $U_{n}=p2^{2n}+q2^{n}+u$ for certain values of $p, q, u\in\Q$. Our expectation follows from the observation of binary expansions of elements of the set $A_{0,2}$. In order to find these families, we compute the set $B_{0,2}$ containing all quadruplets $(a,b,c,d)$ such that $a, b, c, d\in A_{0,2}$ and $a<b<c<d$. The set $B_{0,2}$ has 4967690 elements. Finally, for any $v\in B_{0,2}$ we look for triples $(p, q, u)$ such that $v=(U_{0},U_{1},U_{2},U_{3})$. We thus work with four linear equations in three variables, and it is quite natural expectation that the existence of a solution in this case implies the existence of an {\it infinite} family of solutions of (\ref{maincong}).
In this way we obtain the set, say $C_{0,2}$, of 523 triplets $(p,q,u)$. However, not all triplets in the set $C_{0,2}$ give a solution we are interested in. For example, the quadruple $v=(5,29,253,1405)\in B_{0,2}$ gives the triple $(p, q, u)=(88/3,-64,119/3)$ and the corresponding sequence $U_{n}$. However, we have $U_{4}=6525$ with the corresponding polynomial $B_{U_{4}}(t)=(t+1)(6 t^7+10 t^6+12 t^5+13 t^4+15 t^3+11 t^2+5 t+1)\equiv t^{5}+1\not\equiv 1\pmod{2}$. Moreover, some $(p,q,u)$ give trivial families of solutions. A careful analysis reveals only 18 interesting triples. They are the following
\begin{align*}
\{&(16, -24, 13), (16, -12, 1), (32, -48, 29), (32, -24, 5), (64, -96, 61), (64, -48, 13),\\
  &(64, -24, 1), (128, -192, 125), (128, -96, 29), (128, -48, 5), (256, -384, 253), \\
  &(256, -192, 61), (256, -96, 13), (256, -48, 1), (512, -768, 509), (512, -384, 125), \\
  &(512, -192, 29), (512, -96, 5)\}.
\end{align*}
A quick look at the above triples motivates the conjecture which says that for each $k\in\N_{+}$, the triple $p=2^{k}, q=-3\cdot 2^{k-1}, u=2^{k}-3$ leads to a non-trivial family of solutions of (\ref{maincong}). Thus, for a given $k\in\N_{\geq 2}$ we consider the corresponding sequence
$$
p_{k,n}=2^{2n+k}-3\cdot 2^{n+k-1}+2^{k}-3
$$
and prove the following.

\begin{thm}
For each $k\in\N_{\geq 2}$ the sequence $(p_{k,n})_{n\in\N_{+}}$, where $p_{k,n}=2^{2n+k}-3\cdot2^{n+k-1}+2^{k}-3$, is a non-trivial sequence of solutions of the congruence {\rm (\ref{maincong})} with $m=2$ and $r=0$.
\end{thm}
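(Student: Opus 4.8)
Since we are in the case $r=0$, the right-hand side of (\ref{maincong}) is simply $1$, so what has to be shown is that $B_{p_{k,n}}(t)\equiv 1\pmod 2$ for every $k\ge 2$ and every $n\ge 1$, together with the \emph{non-triviality} of the family $(p_{k,n})_{n\in\N_{+}}$. The plan is to first produce an explicit closed form for $B_{p_{k,n}}(t)$ as a polynomial in $t$ (with $k,n$ as parameters) by unravelling the binary structure of $p_{k,n}$ via two applications of Lemma~\ref{lem1} and Lemma~\ref{lem2}, then reduce it modulo $2$, and finally deduce non-triviality from a size comparison between $B_{p_{k,n}}(1)=s_{p_{k,n}}$ and $\op{deg}B_{p_{k,n}}(t)$.

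For the closed form, note first that $p_{k,n}=(2^{n+1}-3)\,2^{n+k-1}+(2^{k}-3)$ and that $0\le 2^{k}-3\le 2^{n+k-1}$ whenever $k\ge 2$ and $n\ge 1$, so the first identity of Lemma~\ref{lem1} (with exponent $n+k-1$, quotient $2^{n+1}-3$ and remainder $2^{k}-3$) gives
\[
B_{p_{k,n}}(t)=B_{2^{n+k-1}-(2^{k}-3)}(t)\,B_{2^{n+1}-3}(t)+B_{2^{k}-3}(t)\,B_{2^{n+1}-2}(t).
\]
To treat the first factor, write $2^{n+k-1}=2^{n-1}\cdot 2^{k}$ and apply the second identity of Lemma~\ref{lem1}; using $B_{2^{j}}(t)=t^{j}$, $B_{3}(t)=1+t$ and Lemma~\ref{lem2} this yields $B_{2^{n+k-1}-(2^{k}-3)}(t)=(1+t)t^{n-1}+B_{2^{k}-3}(t)\,\frac{t^{n-1}-1}{t-1}$. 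Substituting this, together with the Lemma~\ref{lem2} expressions for $B_{2^{n+1}-3}(t)$, $B_{2^{n+1}-2}(t)=t\,B_{2^{n}-1}(t)$ and $B_{2^{k}-3}(t)$, produces the desired explicit polynomial.

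To reduce modulo $2$, the key observation is that all the ``$2^{j}-3$''-type polynomials collapse: Lemma~\ref{lem2} lets one write $B_{2^{j}-3}(t)=(t+1)\frac{t^{j-2}-1}{t-1}+t^{j-2}$, and since $(t+1)\frac{t^{m}-1}{t-1}\equiv t^{m}+1\pmod 2$ this forces $B_{2^{j}-3}(t)\equiv 1\pmod 2$ for every $j\ge 2$. Hence modulo $2$ one has $B_{2^{n+1}-3}(t)\equiv 1$, the factor $B_{2^{n+k-1}-(2^{k}-3)}(t)$ reduces to $(1+t)t^{n-1}+\frac{t^{n-1}-1}{t-1}=\frac{t^{n+1}-1}{t-1}$, and therefore $B_{p_{k,n}}(t)\equiv\frac{t^{n+1}-1}{t-1}+t\cdot\frac{t^{n}-1}{t-1}\equiv 1\pmod 2$, because the two geometric sums agree in all terms $t,\dots,t^{n}$. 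This settles the congruence for all $k\ge 2$ and $n\ge 1$ (the degenerate case $n=1$, where $p_{k,1}=2^{k+1}-3$, is covered as well, and in any case follows directly from Lemma~\ref{lem2}).

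It remains to verify non-triviality, i.e. that for each fixed $k$ the coefficients of the polynomials $B_{p_{k,n}}(t)$ are unbounded as $n\to\infty$. Evaluating the explicit formula at $t=1$, and using $B_{2^{j}-1}(1)=j$ and $B_{2^{k}-3}(1)=s_{2^{k}-3}$, one sees that $s_{p_{k,n}}=B_{p_{k,n}}(1)$ is a quadratic polynomial in $n$ with positive leading coefficient, while the same formula shows $\op{deg}B_{p_{k,n}}(t)\le 2n+k$ grows only linearly. Since $B_{p_{k,n}}(1)=\sum_{i}a(i,p_{k,n}-1)$ is a sum of at most $\op{deg}B_{p_{k,n}}(t)+1$ nonnegative integers, the largest of these coefficients is at least $s_{p_{k,n}}/(\op{deg}B_{p_{k,n}}(t)+1)\to\infty$; hence the coefficients of $B_{p_{k,n}}(t)$ are unbounded and the family is non-trivial. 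I expect the congruence half to be essentially mechanical once the two applications of Lemma~\ref{lem1} are correctly arranged; the step that needs the most care is keeping the explicit $\Z$-expression from the first stage tidy enough to read off simultaneously its degree and the $n^{2}$-growth of its value at $t=1$ — the rest is bookkeeping.
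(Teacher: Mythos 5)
Your proposal is correct. The congruence half is essentially the paper's argument: the same two applications of Lemma~\ref{lem1} (first with $a=n+k-1$, $m=2^{n+1}-3$, $r=2^{k}-3$, then peeling off $2^{n+k-1}-(2^{k}-3)=2^{k}\cdot 2^{n-1}-(2^{k}-3)$), followed by reduction modulo $2$; your observation that $B_{2^{j}-3}(t)\equiv 1\pmod 2$ for all $j\geq 2$ just makes the final reduction tidier than the paper's single long computation. Where you genuinely diverge is non-triviality. The paper proves it by writing out the coefficients of $V_{2,n}$ and $V_{3,n}$ explicitly, establishing the recurrence $V_{k+1,n}=(t+1)V_{k,n}-tV_{k-1,n}$, and iterating the induced coefficient recurrence $c_{i,k}=c_{i,k-1}+c_{i-(k-3),3}-c_{i-(k-3),2}$ to show the differences $c_{j,3,n}-c_{j,2,n}$ are unbounded. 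You instead use an averaging argument: the coefficients $a(i,p_{k,n}-1)$ are nonnegative (they count hyperbinary representations), their sum $B_{p_{k,n}}(1)=\bigl(2+(2k-3)(n-1)\bigr)(2n-1)+(2k-3)n$ grows quadratically in $n$ for each fixed $k\geq 2$, while $\op{deg}B_{p_{k,n}}(t)+1$ grows only linearly, so the maximal coefficient tends to infinity. This is shorter, avoids all explicit coefficient bookkeeping, and generalizes readily; what it gives up is the explicit expansions of $B_{p_{2,n}}(t)$ and $B_{p_{3,n}}(t)$, which the paper reuses later (for instance to derive the identity relating $B_{s_{0,n}}(t)$ to $B_{p_{2,n}}(t)$ in the $(r,m)=(1,2)$ case and to describe the exact coefficient sets). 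Both routes are valid proofs of the stated theorem.
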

\begin{proof}
We prove that for each $k\in\N_{\geq 2}$ and $n\in\N_{+}$, the number $p_{k,n}=2^{2n+k}-3\cdot 2^{n+k-1}+2^{k}-3$ is a solution of the congruence (\ref{maincong}). Applying the first equality from Lemma \ref{lem1} with $a=n+k-1, m=2^{n+1}-3, r=2^{k}-3$, and using the first two identities from Lemma \ref{lem2}, we get the following
\begin{align*}
B_{p_{n}}(t)&=B_{2^{n+k-1}-2^{k}+3}(t)B_{2^{n+1}-3}(t)+B_{2^{k}-3}B_{2^{n+1}-2}(t).
\end{align*}
Next, applying Lemma \ref{lem1} with the number $2^{n+k-1}-2^{k}+3=2^{k}(2^{n-1})-(2^{k}-3)$ we get the equality
\begin{align*}
B_{p_{k,n}}(t)&=(B_{3}(t)B_{2^{n-1}}(t)+B_{2^{k}-3}(t)B_{2^{n-1}-1}(t))B_{2^{n+1}-3}(t)+B_{2^{k}-3}(t)B_{2^{n+1}-2}(t)\\
              &=\left((t+1)t^{n-1}+\left(t\frac{t^{k-2}-1}{t-1}+\frac{t^{k-1}-1}{t-1}\right)\frac{t^{n-1}-1}{t-1}\right)\left(t\frac{t^{n-1}-1}{t-1}+\frac{t^n-1}{t-1}\right)\\
              &\quad+t\left(t\frac{t^{k-2}-1}{t-1}+\frac{t^{k-1}-1}{t-1}\right)\frac{t^{n}-1}{t-1}\\
              &=\left((t+1)t^{n-1}+\frac{(2t^{k-1}-t-1)(t^{n-1}-1)}{(t-1)^2}\right)\frac{2t^{n}-t-1}{t-1}+\frac{t(2t^{k-1}-t-1)(t^{n}-1)}{(t-1)^{2}}.
\end{align*}
In order to finish the proof it is enough to show that $B_{p_{k,n}}(t)\equiv 1\pmod{2}$ and that the sequence $(p_{k,n})_{n\in\N_{+}}$ is non-trivial. Working modulo 2 we have
\begin{align*}
B_{p_{k,n}}(t)&\equiv (t+1)t^{n-1}+\frac{t^{n-1}+1}{t+1}+t\frac{t^{n}+1}{t+1}\\
            &\equiv \frac{(t^2+1)t^{n-1}+t^{n-1}+1+t^{n+1}+t}{t+1}\\
            &\equiv \frac{2t^{n+1}+2t^{n-1}+t+1}{t+1}\equiv 1\pmod{2}.
\end{align*}

Finally, we prove that the sequence $(p_{k,n})_{n\in\N_{+}}$ is non-trivial. In order to shorten the notation let us put $V_{k,n}(t)=B_{p_{k,n}}(t)$. First of all, let us note that by a simple induction argument we can write the expanded form of the polynomials $V_{2,n}(t), V_{3,n}(t)$ in the following way: $V_{2,1}(t)=1+2t, V_{3,1}(t)=1+2t+2t^2$ and for $n\geq 2$ we have
\begin{align*}
V_{2,n}(t)&=1+2\sum_{i=1}^{n-1}(i+1)t^{i}+2t^{n-1}\sum_{i=1}^{n-1}(n+1-i)t^{i}+2t^{2n-1},\\
V_{3,n}(t)&=1+6t+2\sum_{i=2}^{n-1}(3i+1)t^{i}+2(3n-1)t^{n}+2(3n-4)t^{n+1}+2t^{n}\sum_{n=2}^{n-1}(3n-3i-2)t^{i}.\\
\end{align*}
In particular, the set of all coefficients of the sequence of polynomials $(B_{p_{2,n}}(t))_{n\in\N}$, is equal to the set of even positive integers and the number 1. In case of the sequence of polynomials $(B_{p_{3,n}}(t))_{n\in\N}$, the set of coefficients contains all positive integers  $\equiv 2, 4\pmod{6}$ and the numbers 1 and 6.

Next, a simple calculation using the explicit form of the polynomial $V_{k,n}(t)$ leads to the identity
\begin{equation}\label{Vformula}
V_{k+1,n}(t)=(t+1)V_{k,n}(t)-tV_{k-1,n}(t).
\end{equation}
Let us note that
$$
e_{k,n}:=\op{deg}V_{k,n}(t)=\op{max}\{2n-1,2n+k-5,n+k-2\}.
$$
In particular, if $k=3$ and $n\in\N_{\geq 2}$ then $e_{3,n}=2n-1$. If $k\geq 4$ and $n\in\N_{\geq 3}$ then we have $e_{k,n}=2n+k-5$. Let us write
$$
V_{k,n}(t)=\sum_{i=0}^{e_{k,n}}c_{i,k,n}x^{i}.
$$
In particular, $c_{0,k,n}=1$ and by comparing coefficients on both sides of the identity (\ref{Vformula}) we get the recurrence relation
\begin{equation}\label{crec}
c_{i,k,n}= c_{i-1,k-1,n}+c_{i,k-1,n}-c_{i-1,k-2,n}.
\end{equation}
The above recurrence does not depend on $n$, so in the sequel, in order to simplify the notation a bit, we write $c_{i,k}$ instead of $c_{i,k,n}$. Next, let us note that the coefficients $c_{i,2}, c_{i,3}$ are known (from the explicit expressions for $V_{2,n}, V_{3,n}$). Thus, we see that $c_{i,k,n}$ can be expressed in terms of these coefficients only. More precisely, by iterating (\ref{crec}) with respect to the first index, we get the chain of equalities
\begin{align*}
&c_{i,k}=c_{i-1,k-1}+c_{i,k-1}-c_{i-1,k-2}\\
       &\;=c_{i-2,k-2}+c_{i-1,k-2}-c_{i-2,k-3}+c_{i,k-1}-c_{i-1,k-2}=c_{i-2,k-2}+c_{i,k-1}-c_{i-2,k-3}\\
       &\;=c_{i-3,k-3}+c_{i-2,k-3}-c_{i-3,k-4}+c_{i,k-1}-c_{i-2,k-3}=c_{i-3,k-3}+c_{i,k-1}-c_{i-3,k-4}\\
       &\;\quad \vdots\\
       &\;=c_{i-(k-3),k-(k-3)}+c_{i,k-1}-c_{i-(k-3),k-(k-2)}=c_{i,k-1}+c_{i-(k-3),3}-c_{i-(k-3),2}.
\end{align*}
Now, by iterating the equality $c_{i,k}=c_{i,k-1}+c_{i-(k-3),3}-c_{i-(k-3),2}$  with respect to the second index, we get the expression for $c_{i,k}$ in the following form
$$
c_{i,k}=c_{i,2}+\sum_{j=0}^{k-3}(c_{i+j+3-k,3}-c_{i+j+3-k,2}).
$$
From the explicit form of coefficients $c_{i,2}=c_{i,2,n}$ and $c_{i,3}=c_{i,3,n}$ we get
\begin{equation*}
c_{j,3,n}-c_{j,2,n}=\begin{cases}\begin{array}{lll}
                                   0         &  & j=0, \\
                                   2         &  & j=1, \\
                                   4j        &  & j\in\{2,\ldots,n-1\}, \\
                                   4n-2      &  & j=n, \\
                                   4n-6      &  & j=n+1, \\
                                   4(2n-j-1) &  & j\in\{n+2,\ldots,2n-2\}, \\
                                   0         &  & j=2n-1.
                                 \end{array}
\end{cases}
\end{equation*}
In particular, we see that with $n\rightarrow+\infty$ the difference $c_{j,3,n}-c_{j,2,n}$ goes to infinity with $j$. Consequently, for any given $k\in\N_{\geq 4}$, the set
$$
\{c_{i,k,n}:\;i\in\{0,\ldots,e_{k,n}\},\;n\in\N_{+}\}
$$
is infinite.
\end{proof}

We note that the family $(p_{k,n})_{n\in\N_{+}}, k\in\N_{\geq 2}$, allows us to give a non-trivial lower bound for the function $\Pi_{0,2}(x)$. More precisely, we prove the following.

\begin{thm}
For $x\gg 0$ we have the following inequality
$$
\Pi_{0,2}(x)\geq \frac{1}{2}\lfloor\log_{2}x\rfloor ^2-\frac{3}{2}\lfloor\log_{2}x\rfloor+2.
$$
\end{thm}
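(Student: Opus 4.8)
The plan is to derive the estimate purely by counting, from below, how many distinct members of the family $(p_{k,n})$ of the previous theorem lie in $[1,x]$. Since each $p_{k,n}$ with $k\in\N_{\geq 2}$, $n\in\N_+$ solves (\ref{maincong}) for $m=2$, $r=0$, and $\Pi_{0,2}(x)$ counts all solutions $\leq x$, it suffices to carry out three steps: (a) prove that the map $(k,n)\mapsto p_{k,n}$ is injective; (b) for a fixed $x$, decide which pairs $(k,n)$ satisfy $p_{k,n}\leq x$; (c) count those pairs and evaluate the resulting finite sum as a polynomial in $\lfloor\log_2 x\rfloor$.

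For step (a) I would write $p_{k,n}+3=2^{k}q(n)$ with $q(n)=2^{2n}-3\cdot 2^{n-1}+1$. One checks that $q$ is strictly increasing on $\N_+$ and that, for $n\geq 2$, the integer $q(n)$ is odd (the part $2^{2n}-3\cdot 2^{n-1}$ being even there), whereas $q(1)=2$. Consequently, if $p_{k,n}+3$ is a power of $2$ then necessarily $n=1$ and $k$ is read off from $p_{k,n}+3=2^{k+1}$; otherwise $n\geq 2$, the exponent $k$ is exactly the $2$-adic valuation of $p_{k,n}+3$, and $n$ is then pinned down by the value $q(n)=(p_{k,n}+3)/2^{k}$. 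Hence all the $p_{k,n}$ are pairwise distinct and each contributes a distinct term to $\Pi_{0,2}(x)$ (one may moreover throw in the extra solution $n=1$ of (\ref{maincong}), i.e. $B_1(t)\equiv1$).

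For step (b) I would trap each $p_{k,n}$ between two consecutive powers of $2$. For $n\geq 2$ one has $0<3\cdot 2^{n+k-1}-2^{k}+3\leq 2^{2n+k-1}$: the upper inequality holds because $2^{2n+k-1}-3\cdot 2^{n+k-1}=2^{n+k-1}(2^{n}-3)\geq 2^{k+1}$, so $2^{2n+k-1}-3\cdot2^{n+k-1}+2^{k}-3\geq 3\cdot2^{k}-3>0$; therefore $2^{2n+k-1}\leq p_{k,n}<2^{2n+k}$. Similarly $2^{k}\leq p_{k,1}=2^{k+1}-3<2^{k+1}$. Writing $L=\lfloor\log_2 x\rfloor$, so that $2^{L}\leq x$, it follows that $p_{k,n}\leq x$ holds whenever $2n+k\leq L$ (respectively whenever $k+1\leq L$ when $n=1$). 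Restricting to these pairs throws away some genuine solutions but keeps the count transparent while still producing a valid lower bound.

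For step (c): for each fixed $n$ the admissible exponents $k$ run over an interval starting at $2$ and ending at $L-1$ when $n=1$ or at $L-2n$ when $2\leq n\leq (L-2)/2$, so their number is $L-2$ in the first case and $L-2n-1$ in the second. Summing over $n$ is a short finite arithmetic computation giving a quadratic in $L$; evaluating it, with due attention to the parity of $L$ hidden inside the floor, yields the asserted inequality for all sufficiently large $x$, the finitely many remaining values being covered by the hypothesis $x\gg 0$. I expect step (b) to be the real work: the dyadic localisation of $p_{k,n}$ has to be sharp enough that no admissible pair is dropped and that the leading constant emerges correctly; granting that, (a) is a one-line valuation argument and (c) is pure bookkeeping.
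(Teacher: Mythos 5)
Your strategy is in substance the same as the paper's: both arguments count the members of the double family $(p_{k,n})$ lying below $x$, after checking that distinct pairs $(k,n)$ give distinct integers. Your steps (a) and (b) are correct and in fact cleaner than the paper's versions: the identity $p_{k,n}+3=2^{k}(2^{2n}-3\cdot 2^{n-1}+1)$, with the second factor odd for $n\geq 2$ and equal to $2$ for $n=1$, settles injectivity by a valuation argument where the paper argues by cases; and the sandwich $2^{2n+k-1}\leq p_{k,n}<2^{2n+k}$ replaces the paper's explicit solution of a quadratic inequality in $2^{n}$.

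The gap is in step (c), exactly the step you dismiss as bookkeeping. Writing $L=\lfloor\log_{2}x\rfloor$, the pairs you admit are $k\in\{2,\dots,L-1\}$ for $n=1$ and $k\in\{2,\dots,L-2n\}$ for $2\leq n\leq\lfloor(L-2)/2\rfloor$, and their number is
$$
(L-2)+\sum_{n=2}^{\lfloor (L-2)/2\rfloor}(L-2n-1)=\tfrac{1}{4}L^{2}-L+O(1),
$$
whereas the target is $\tfrac{1}{2}L^{2}-\tfrac{3}{2}L+2$; the deficit is about $\tfrac{1}{4}L^{2}$ and is positive for every $L\geq 3$, so the asserted inequality does not follow. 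Sharpening step (b) cannot help: the exact number of $p_{k,n}$ below $x$ is $\sum_{k}\tfrac{1}{2}\log_{2}(x/2^{k})+O(L)\sim\tfrac{1}{4}L^{2}$ as well. A concrete check: at $L=26$ your count gives $24+11^{2}=145$, which is precisely the number of $p_{k,n}$ below $2^{26}$ recorded in the paper's own Remark, while the stated bound evaluates to $301$ there. So no count of this family alone can reach the constant $\tfrac{1}{2}$. (For what it is worth, the paper's proof stumbles at the same place: its penultimate display equals $\tfrac{1}{2}(\tfrac{1}{2}L^{2}-\tfrac{3}{2}L+2)+O(L)$, and the final inequality silently drops the outer factor $\tfrac{1}{2}$; what that argument actually establishes is $\Pi_{0,2}(x)\geq\tfrac{1}{4}L^{2}-\tfrac{3}{4}L+1$.) To repair your write-up you must either weaken the claimed leading constant to $\tfrac{1}{4}$ or supply additional solutions outside the family $(p_{k,n})$.
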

\begin{proof}
For a given $x\gg 0$ we want to estimate the number of solutions of the double inequality $0<p_{k,n}<x$ in positive integers $k, n$. Let us observe that if $x>2^{k}$, i.e., $k\leq \log_{2}x$, then the inequality $0<p_{k,n}<x$ is equivalent with the following one
$$
n\leq  \log_{2}\left(\frac{1}{4}\left(3+\sqrt{\frac{16x+48-7\cdot 2^{k}}{2^{k}}}\right)\right).
$$
Moreover, we note the important property:
\begin{equation}\label{property}
p_{k_{1},n_{1}}=p_{k_{2},n_{2}}\Longleftrightarrow k_{1}=k_{2}\;\mbox{and}\;n_{1}=n_{2}.
\end{equation}
Indeed, let us suppose that for some $k_{1}, k_{2}, n_{1}, n_{2}\in\N_{+}$ we have the equality $p_{k_{1},n_{1}}=p_{k_{2},n_{2}}$ . Without loss of generality we can assume that $k_{1}\leq k_{2}$. The equality $p_{k_{1},n_{1}}=p_{k_{2},n_{2}}$, after division by $2^{k_{1}}$, gives
$$
2^{2n_{1}}-3\cdot 2^{n_{1}-1}+1=2^{2n_{2}+k_{2}-k_{1}}-3\cdot 2^{n_{2}+k_{2}-k_{1}-1}+2^{k_{2}-k_{1}}.
$$
If $n_{1}=1$, we get the equality $2=2^{k_{2}-k_{1}}(2^{2n_{2}}-3\cdot 2^{n_{2}-1}+1)$. If $n_{2}=1$, then we get $k_{1}=k_{2}$. If $n_{2}>1$, then $2^{2n_{2}}-3\cdot 2^{n_{2}-1}+1>2$ and we get a contradiction. If $n_{1}>1$ and $k_{1}<k_{2}$, then again we get a contradiction. Thus, if $n_{1}>1$, then necessarily $k_{1}=k_{2}$. Consequently $n_{1}=n_{2}$ and the equivalence (\ref{property}) is true.

The above property implies that for any two different pairs $(k_{1}, n_{1}), (k_{2},n_{2})$ we get different numbers $p_{k_{1},n_{1}}, p_{k_{2},n_{2}}$. We thus get that
$$
\Pi_{0,2}(x)\geq \sum_{k=2}^{\lfloor\log_{2}x\rfloor}\log_{2}\left(\frac{1}{4}\left(3+\sqrt{\frac{16x+48-7\cdot 2^{k}}{2^{k}}}\right)\right).
$$

The sum in the above inequality is quite complicated and we simplify it a bit. We note the following inequality
$$
\frac{1}{4}\left(3+\sqrt{\frac{16x+48-7\cdot 2^{k}}{2^{k}}}\right)>\frac{1}{4}\sqrt{\frac{x}{2^{k-3}}}=\sqrt{\frac{x}{2^{k+1}}}
$$
which is simple consequence of the assumption $x>2^{k}$. We thus get that
\begin{align*}
\Pi_{0,2}(x)&\geq\sum_{k=2}^{\lfloor\log_{2}x\rfloor}\frac{1}{2}\log_{2}\frac{x}{2^{k+1}}=\frac{1}{2}\log_{2}\left(\prod_{k=2}^{\lfloor\log_{2}x\rfloor}\frac{x}{2^{k+1}}\right)\\
            &=\frac{1}{2}\log_{2}\left(\frac{x^{\lfloor\log_{2}x\rfloor}}{2^{\frac{1}{2}(\lfloor\log_{2}x\rfloor ^2+3\lfloor\log_{2}x\rfloor -4)}}\right)\\
            &=\frac{1}{2}(\log_{2}x \lfloor\log_{2}x\rfloor-\frac{1}{2}(\lfloor\log_{2}x\rfloor ^2+3\lfloor\log_{2}x\rfloor -4))\\
            &\geq \frac{1}{2}\lfloor\log_{2}x\rfloor ^2-\frac{3}{2}\lfloor\log_{2}x\rfloor+2,
\end{align*}
and we are done.
\end{proof}

\begin{rem}
{\rm The bound obtained in the above theorem seems to be far from optimal. Indeed, there are exactly 145 numbers of the form $p_{k,n}$ which are less then $2^{26}$. This is less then $6\%$ of the solutions of (\ref{maincong}) found by computer search. }
\end{rem}

We performed similar analysis for the case $(r,m)=(1,2)$. Unexpectedly, we were unable to find a double family of solutions. More precisely, we started with the set $A_{1,2}$ containing exactly 134 solutions of (\ref{maincong}) satisfying $n\leq 10^5$. The set $B_{1,2}$ containing all quadruplets $(a,b,c,d)$ such that $a, b, c, d\in A_{1,2}$ and $a<b<c<d$ has 12840751 elements. The corresponding set $C_{1,2}$ of triplets $(p,q,u)$ contains 140 elements. A careful analysis of the set $C_{1,2}$ reveals only eight interesting triplets and this number can be further reduced to four (due to the fact that some triplets lead to sequences which are subsequences of the other). They are the following
\begin{equation*}
\{(64, -36, -1), (128, -72, -5), (1024, -408, -1), (1024, -288, -13)\}.
\end{equation*}
In this way we get four non-trivial infinite sequences of solutions of (\ref{maincong}). The triplets correspond to the sequences $(s_{i,n})_{n\in\N}, i=0,1,2,3$, where
\begin{equation*}
\begin{array}{lll}
  s_{0,n}=2^{2n+4}-9\cdot 2^{n+1}-1,  &  & s_{1,n}=2^{2n+5}-9\cdot 2^{n+2}-5, \\
  s_{2,n}=2^{2n+8}-9\cdot 2^{n+4}-13, &  & s_{3,n}=2^{2n+8}-51\cdot 2^{n+2}-1.
\end{array}
\end{equation*}

We confirm our findings in the following.

\begin{thm}
There is an infinite non-trivial sequence of solutions of the congruence {\rm (\ref{maincong})} with $m=2$ and $r=1$.
\end{thm}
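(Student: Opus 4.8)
The plan is to establish the claim for the sequence $s_{0,n}=2^{2n+4}-9\cdot 2^{n+1}-1$; the argument for the other three sequences $s_{1,n},s_{2,n},s_{3,n}$ will be entirely analogous. First I would write $s_{0,n}=m\cdot 2^{a}-r$ with $m=2^{n+3}-9$, $a=n+1$ and $r=1$ (legitimate since $0\le 1\le 2^{n+1}$), and apply the second identity of Lemma \ref{lem1}:
\[
B_{s_{0,n}}(t)=B_{2^{n+1}-1}(t)\,B_{2^{n+3}-9}(t)+B_{1}(t)\,B_{2^{n+3}-10}(t).
\]
Here $B_{1}(t)=1$ and $B_{2^{n+3}-10}(t)=B_{2(2^{n+2}-5)}(t)=t\,B_{2^{n+2}-5}(t)$, so the three surviving polynomials $B_{2^{n+1}-1}(t)$, $B_{2^{n+3}-9}(t)$ and $B_{2^{n+2}-5}(t)$ all have closed forms supplied by Lemma \ref{lem2} (valid for $n\ge 1$). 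Substituting gives a fully explicit expression for $B_{s_{0,n}}(t)$ in terms of $t$ and $n$.

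For the congruence itself I would reduce that expression modulo $2$, using $t-1\equiv t+1$. A short manipulation of the closed forms shows $B_{2^{n+1}-1}(t)=1+t+\cdots+t^{n}\equiv\frac{t^{n+1}+1}{t+1}$, $B_{2^{n+2}-5}(t)\equiv\frac{t^{n+1}+1}{t+1}$ and $B_{2^{n+3}-9}(t)\equiv t^{n+1}+t+1\pmod 2$, whence
\[
B_{s_{0,n}}(t)\equiv\frac{t^{n+1}+1}{t+1}(t^{n+1}+t+1)+t\cdot\frac{t^{n+1}+1}{t+1}=\frac{(t^{n+1}+1)^{2}}{t+1}=\frac{t^{2n+2}+1}{t+1}=\sum_{i=0}^{2n+1}t^{i}\pmod 2.
\]
To turn this into a statement about (\ref{maincong}) I must check that $e(s_{0,n})=2n+1$: in the product $B_{2^{n+1}-1}(t)B_{2^{n+3}-9}(t)$ the factors have degrees $n$ and $n+1$ with unit leading coefficients, while $t\,B_{2^{n+2}-5}(t)$ has degree $n+1<2n+1$, so $\op{deg}B_{s_{0,n}}(t)=2n+1$ and its leading coefficient is odd. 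Hence $B_{s_{0,n}}(t)\equiv 1+t\frac{t^{e(s_{0,n})}-1}{t-1}\pmod 2$, i.e.\ (\ref{maincong}) holds with $m=2$ and $r=1$.

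It then remains to prove non-triviality, namely that the coefficients of the polynomials $B_{s_{0,n}}(t)$, $n\in\N_{+}$, form an infinite set. For this I would exhibit one coefficient that grows with $n$. Using that $B_{2^{n+1}-1}(t)=1+t+\cdots+t^{n}$ has all coefficients $1$, that $B_{2^{n+3}-9}(1)=4n-1$ with constant term $1$, and that $t\,B_{2^{n+2}-5}(t)$ has degree $n+1$ with leading coefficient $1$, a direct count shows that the coefficient of $t^{n+1}$ in $B_{2^{n+1}-1}(t)B_{2^{n+3}-9}(t)$ equals $B_{2^{n+3}-9}(1)-1=4n-2$ and the coefficient of $t^{n+1}$ in $t\,B_{2^{n+2}-5}(t)$ equals $1$, so the coefficient of $t^{n+1}$ in $B_{s_{0,n}}(t)$ is $4n-1$. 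Since $4n-1\to\infty$, the set of coefficients is infinite, so $(s_{0,n})_{n\in\N_{+}}$ is a non-trivial family of solutions of (\ref{maincong}).

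The only delicate points are bookkeeping rather than genuine obstacles: one must treat the small values of $n$ for which some geometric sums in Lemma \ref{lem2} are empty, and one must keep exact track of degrees so that the top coefficient of $B_{s_{0,n}}(t)$ does not vanish modulo $2$ — this is precisely what makes $e(s_{0,n})=2n+1$ the exponent that appears in the congruence. Once the decomposition via Lemma \ref{lem1} is in place, both the mod $2$ collapse and the computation of the $t^{n+1}$ coefficient are routine.
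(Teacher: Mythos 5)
Your proposal is correct and follows essentially the same route as the paper: the same application of the second identity of Lemma \ref{lem1} with $a=n+1$, $m=2^{n+3}-9$, $r=1$, the same closed forms from Lemma \ref{lem2}, and the same mod-$2$ collapse of $B_{s_{0,n}}(t)$ to $\sum_{i=0}^{2n+1}t^{i}$ together with the degree check $e(s_{0,n})=2n+1$. The only divergence is in the non-triviality step, where you directly compute the coefficient of $t^{n+1}$ (getting $4n-1$, which matches the paper's explicit expansion of $B_{s_{0,n}}(t)$), whereas the paper deduces non-triviality from the identity $B_{s_{0,n}}(t)=(t+1)B_{p_{2,n}}(t)+t^{2}\frac{t^{2n}-1}{t-1}$ and the already-established non-triviality of $(p_{2,n})_{n\in\N}$; both arguments are valid.
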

\begin{proof}
Because in each case the proof goes exactly in the same way we prove the non-triviality only for the sequence $s_{0,n}=2^{2n+4}-9\cdot 2^{n+1}-1$. We write $s_{0,n}=2^{n+1}(2^{n+3}-9)-1$ and apply the second identity from Lemma \ref{lem1} with $a=n+1, m=2^{n+3}-9, r=1$. Next, using suitable identities from Lemma \ref{lem2} we get
\begin{align*}
B_{s_{0,n}}(t)&=B_{2^{n+1}-1}(t)B_{2^{n+3}-9}(t)+B_{1}(t)B_{2^{n+3}-10}(t)\\
            &=B_{2^{n+1}-1}(t)B_{2^{n+3}-9}(t)+tB_{2^{n+2}-5}(t)\\
            &=\frac{t^{n+1}-1}{t-1}\left(t\frac{t^{n-1}-1}{t-1}+(t^2+t+1)\frac{t^{n}-1}{t-1}\right)+t\left(t\frac{t^{n-1}-1}{t-1}+(t+1)\frac{t^{n}-1}{t-1}\right)\\
            &=\frac{t^{2n+3}+t^{2n+2}+2t^{2n+1}-2t^{n+2}-4t^{n+1}-2t^{n}-2t^{3}+2t^{2}+3t+1}{(t-1)^{2}}.
\end{align*}
Let us observe that $\op{deg}B_{s_{0,n}}(t)=2n+1$ and thus
\begin{align*}
B_{s_{0,n}}(t)-1-t\frac{t^{2n+1}-1}{t-1}\equiv \frac{2t(t+1)(t^{2n}-t^{n}-t^{n-1}-t+2)}{(t-1)^{2}}\equiv 0\pmod{2}.
\end{align*}
In order to show that the sequence $(s_{0,n})_{n\in\N}$ is non-trivial it is enough to note the following identity
\begin{equation}\label{pqconnection}
B_{s_{0,n}}(t)=(t+1)B_{p_{2,n}}(t)+t^{2}\frac{t^{2n}-1}{t-1}.
\end{equation}
The proof of this identity is an immediate consequence of the obtained explicit expression of $B_{p_{2,n}}(t)$ and $B_{s_{0,n}}(t)$. Thus, the non-triviality of the sequence $(p_{2,n})_{n\in\N}$ immediately implies the non-triviality of the sequence $(s_{0,n})_{n\in\N}$. An explicit expansion can be now deduced easily. We get $B_{s_{0,1}}(t)=B_{27}(t)=(t+1)^3$ and for $n\geq 2$ we have
$$
B_{s_{0,n}}(t)=1+5t+\sum_{i=2}^{n-1}(4i-3)t^{i}+(4n+1)t^{n}+(4n-1)t^{n+1}+\sum_{i=n+2}^{2n}(4(2n-i)+3)t^{i}+t^{2n+1}.
$$
In particular, the set of all coefficients of the sequence of polynomials $(B_{s_{0,n}}(t))_{n\in\N}$ is equal to the set of odd positive integers. Our proof is finished.
\end{proof}

Although we tried hard, we were unable to extend the sequences $(s_{i,n})_{n\in\N_{+}}$ for $i=0,1,2,3,$ to an infinite family of sequences. Anyway, the existence of four infinite families of non-trivial solutions (together with trivial families) lead immediately to the following.

\begin{cor}
We have $\Pi_{1,2}(x)\geq \log_{2}x$.
\end{cor}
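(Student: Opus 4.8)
The bound is designed to be a very cheap consequence of the explicit solution families already produced, so the plan is essentially a counting argument. The backbone is the \emph{trivial} family $N_{j}:=2^{j+1}-1$, $j\in\N$. By the first identity of Lemma~\ref{lem2} we have $B_{N_{j}}(t)=\sum_{i=0}^{j}t^{i}$ and $e(N_{j})=\deg B_{N_{j}}(t)=j$, so $N_{j}$ satisfies (\ref{maincong}) with $m=2$, $r=1$ — in fact with equality, and for every modulus. The numbers $N_{j}$ are pairwise distinct, and $N_{j}\le x$ exactly when $j\le\log_{2}(x+1)-1$; hence $\{j\in\N:\;N_{j}\le x\}$ has precisely $\lfloor\log_{2}(x+1)\rfloor$ elements, and already $\Pi_{1,2}(x)\ge\lfloor\log_{2}(x+1)\rfloor$.

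To push this quantity above $\log_{2}x$ I would adjoin one further solution taken from a non-trivial family of the previous theorem, say $s_{0,1}=2^{6}-9\cdot 2^{2}-1=27$. The key point is that no $s_{0,n}$ is of the form $2^{k}-1$: writing $s_{0,n}+1=2^{n+1}\bigl(2^{n+3}-9\bigr)$ and noting that $2^{n+3}-9$ is odd and larger than $1$ for $n\ge 1$, we see that $s_{0,n}+1$ is not a power of $2$, so $s_{0,n}\notin\{N_{j}:j\in\N\}$. Consequently, for every $x\ge 27$ we obtain
\begin{equation*}
\Pi_{1,2}(x)\ \ge\ \#\{j\in\N:\;N_{j}\le x\}\;+\;1\;=\;\lfloor\log_{2}(x+1)\rfloor+1\;\ge\;\log_{2}(x+1)\;>\;\log_{2}x .
\end{equation*}
For the finitely many $x<27$ the inequality is checked by hand — by listing the odd $n\le 26$ that satisfy (\ref{maincong}), which is routine and already implicit in the computations behind Table~1 — and this finishes the proof. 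As with the other estimate in this section, the statement is only of interest for large $x$, so one could equally well state it with the harmless qualifier ``for $x\gg 0$''.

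I do not anticipate a real obstacle. The only step requiring a moment's attention is the disjointness bookkeeping: making sure that the extra solution(s) borrowed from the non-trivial families are not already counted among the $N_{j}$, which is precisely what the factorization $s_{0,n}+1=2^{n+1}(2^{n+3}-9)$ settles. If one preferred a larger constant one could count the contributions of all four families $(s_{i,n})_{n}$, $i=0,1,2,3$, at once, but a single extra solution is enough for the inequality as stated.
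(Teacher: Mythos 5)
Your argument is correct for $x\geq 27$ and is essentially the paper's (unwritten) proof: the paper merely asserts that the trivial family together with the non-trivial families $(s_{i,n})$ gives the bound, which is exactly your count of $\lfloor\log_{2}(x+1)\rfloor$ solutions of the form $2^{j+1}-1$ plus at least one extra solution such as $s_{0,1}=27$. One correction: the promised hand-check for $x<27$ cannot succeed, because the inequality as literally stated fails there --- for instance $\Pi_{1,2}(5)=|\{1,3\}|=2<\log_{2}5$ --- so the corollary genuinely needs the qualifier ``for $x\gg 0$'' (or $x\geq 27$) that you yourself propose; this is an imprecision in the paper's statement rather than a gap in your argument.
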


Our theoretical result and numerical computations suggest the following series of questions.

\begin{ques}
Does the limit $\lim_{x\rightarrow +\infty}\frac{\Pi_{0,2}(x)}{\Pi_{1,2}(x)}$ exists?
\end{ques}

\begin{ques}
What is true order of magnitude of the function $\Pi_{r,2}(x)$ for $r\in\{0,1\}$?
\end{ques}

The graphs of the quotients $\Pi_{r,2}(x)/(\log_{2}x)^{2}, r=0,1,$ presented below suggest the following
\begin{ques}
Let $r\in\{0,1\}$. Is it true that $\lim_{x\rightarrow+\infty}\frac{\Pi_{r,2}(x)}{(\log_{2}x)^{2}}=+\infty$?
\end{ques}

\begin{figure}[h]\label{Pic3} 
       \centering
         \includegraphics[width=4in]{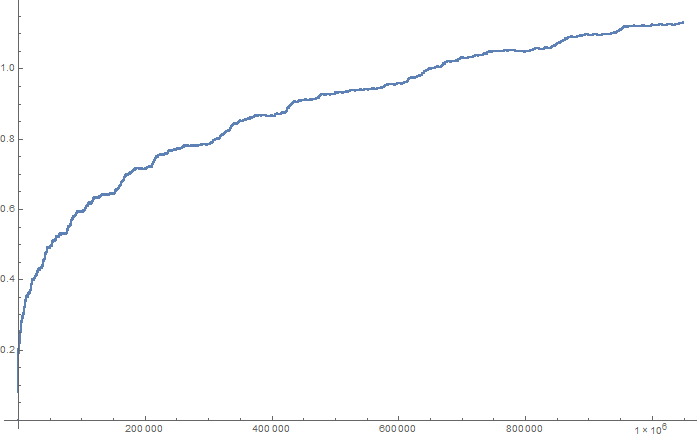}
        \caption{Plot of the function $\protect \Pi_{0,2}(x)/(\log_{2}x)^{2}$ for $\protect 2\leq x\leq 2^{20}$}
       \label{fig:disc}
    \end{figure}

\begin{figure}[h]\label{Pic4} 
       \centering
         \includegraphics[width=4in]{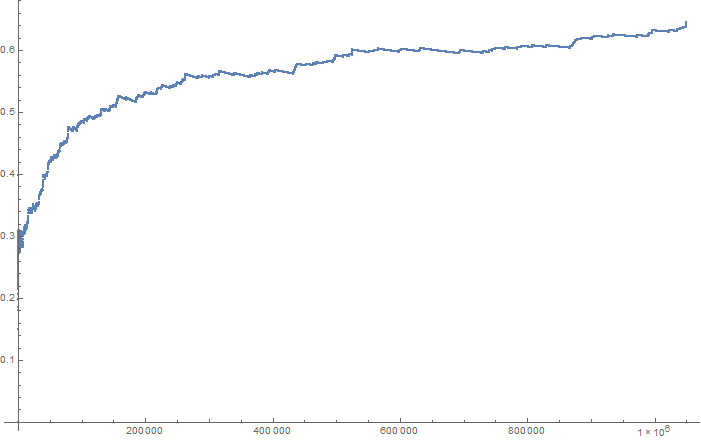}
       \caption{Plot of the function $\protect \Pi_{1,2}(x)/(\log_{2}x)^{2}$ for $\protect 2\leq x\leq 2^{20}$}
       \label{fig:disc}
    \end{figure}

\section{The case of $m=3$}\label{sec3}

In this section we investigate the congruence (\ref{maincong}) in the case $m=3$. Similarly, as in the case of $m=2$ we performed numerical calculations in order to see how many solutions we can expect. As we already noted, for each $m$ and $r=1, 2$ we have solutions related to trivial families $(2^{n+1}-1)_{n\in\N}$ (for $r=1$) and $(2^{n+2}-3)_{n\in\N}$ (for $r=2$). Thus, we were interested in solutions which are not of the form $2^{n+1}-1, 2^{n+2}-3$. Our initial search for solutions of (\ref{maincong}) was for $n\leq 2^{26}$. However, we immediately realized that there are very few interesting solutions in this range and extend the search for $n\leq 2^{30}$. The script needed to compute the solutions was written in the Mathematica computational package \cite{Wol}. The computations in this range took about one week on the personal laptop of the author. Next, our colleague Maciej Gawron \cite{Gaw1}, wrote a very fast program in C++ in order to extend the search up to $2^{40}$ (the program took about one week on the personal laptop of the author). We list the solutions $\leq 2^{40}$ with their binary representation in the tables below.

\begin{equation*}
\begin{array}{|l|l|}
\hline
 n & \mbox{binary expansion of}\;n \\
 \hline
 {\bf 19} & {\bf (1 0 0 1 1)_{2}} \\
 181 & (1 0 1 1 0 1 0 1)_{2} \\
 29899 & (1 1 1 0 1 0 0 1 1 0 0 1 0 1 1)_{2} \\
 40123 & (1 0 0 1 1 1 0 0 1 0 1 1 1 0 1 1)_{2} \\
 44659 & (1 0 1 0 1 1 1 0 0 1 1 1 0 0 1 1)_{2} \\
 72361 & (1 0 0 0 1 1 0 1 0 1 0 1 0 1 0 0 1)_{2} \\
 {\bf 87211} & {\bf (1 0 1 0 1 0 1 0 0 1 0 1 0 1 0 1 1)_{2} }\\
 183439 & (1 0 1 1 0 0 1 1 0 0 1 0 0 0 1 1 1 1)_{2} \\
 373465 & (1 0 1 1 0 1 1 0 0 1 0 1 1 0 1 1 0 0 1)_{2} \\
 2965429 & (1 0 1 1 0 1 0 0 1 1 1 1 1 1 1 0 1 1 0 1 0 1)_{2} \\
 5073589 & (1 0 0 1 1 0 1 0 1 1 0 1 0 1 0 1 0 1 1 0 1 0 1)_{2} \\
 17484211 & (1 0 0 0 0 1 0 1 0 1 1 0 0 1 0 0 1 1 0 1 1 0 0 1 1)_{2} \\
 44733781 & (1 0 1 0 1 0 1 0 1 0 1 0 0 1 0 1 0 1 0 1 0 1 0 1 0 1)_{2} \\
 165459277 & (1 0 0 1 1 1 0 1 1 1 0 0 1 0 1 1 0 1 0 1 0 1 0 0 1 1 0 1)_{2} \\
 1381288843 & (1 0 1 0 0 1 0 0 1 0 1 0 1 0 0 1 1 0 0 1 0 1 1 1 0 0 0 1 0 1 1)_{2} \\
 2572135705 & (1 0 0 1 1 0 0 1 0 1 0 0 1 1 1 1 1 0 1 0 1 1 0 1 0 0 0 1 1 0 0 1)_{2} \\
 2833893901 & (1 0 1 0 1 0 0 0 1 1 1 0 1 0 0 1 1 1 0 0 1 0 1 0 0 0 0 0 1 1 0 1)_{2} \\
 \hline
\end{array}
\end{equation*}
\begin{center}
Table 2. Solutions of the congruence (\ref{maincong}) for the pair $(r,m)=(0,3)$ and $n\leq 2^{40}$.
\end{center}

 \begin{equation*}
 \begin{array}{|l|l|}
 \hline
 n & \mbox{binary expansion of}\;n \\
 \hline
 157 & (1 0 0 1 1 1 0 1)_{2} \\
 4789 & (1 0 0 1 0 1 0 1 1 0 1 0 1)_{2} \\
 12615 & (1 1 0 0 0 1 0 1 0 0 0 1 1 1)_{2} \\
 46257 & (1 0 1 1 0 1 0 0 1 0 1 1 0 0 0 1)_{2} \\
 78765 & (1 0 0 1 1 0 0 1 1 1 0 1 0 1 1 0 1)_{2} \\
 120147 & (1 1 1 0 1 0 1 0 1 0 1 0 1 0 0 1 1)_{2} \\
 201069 & (1 1 0 0 0 1 0 0 0 1 0 1 1 0 1 1 0 1)_{2} \\
 46343011 & (1 0 1 1 0 0 0 0 1 1 0 0 1 0 0 0 1 1 0 1 1 0 0 0 1 1)_{2} \\
 156666811 & (1 0 0 1 0 1 0 1 0 1 1 0 1 0 0 0 1 0 1 1 1 0 1 1 1 0 1 1)_{2} \\
 1235649115 & (1 0 0 1 0 0 1 1 0 1 0 0 1 1 0 1 0 0 0 0 0 1 0 0 1 0 1 1 0 1 1)_{2} \\
 45728246203 & (1 0 1 0 1 0 1 0 0 1 0 1 1 0 0 1 1 1 0 1 1 0 1 0 1 0 0 1 1 0 1 1 1 0 1 1)_{2} \\
 78080354869 & (1 0 0 1 0 0 0 1 0 1 1 0 1 1 1 1 1 0 0 1 1 1 0 1 0 1 0 1 0 0 0 1 1 0 1 0 1)_{2} \\
 95882561515 & (1 0 1 1 0 0 1 0 1 0 0 1 1 0 0 0 0 1 0 1 1 1 1 0 0 0 1 1 1 1 1 1 0 1 0 1 1)_{2} \\
 775752845083 & (1 0 1 1 0 1 0 0 1 0 0 1 1 1 1 0 0 1 1 1 1 0 0 1 0 0 0 0 0 1 1 1 0 0 0 1 1 0 1 1)_{2} \\
 \hline
\end{array}
 \end{equation*}
\begin{center}
Table 3. Solutions of the congruence (\ref{maincong}) for the pair $(r,m)=(1,3)$ and $n\leq 2^{40}$.
\end{center}

\begin{equation*}
\begin{array}{|l|l|}
\hline
 n & \mbox{binary expansion of}\;n \\
 \hline
 83 & (1 0 1 0 0 1 1)_{2} \\
 359 & (1 0 1 1 0 0 1 1 1)_{2} \\
 631 & (1 0 0 1 1 1 0 1 1 1)_{2} \\
 2633 & (1 0 1 0 0 1 0 0 1 0 0 1)_{2} \\
 37579 & (1 0 0 1 0 0 1 0 1 1 0 0 1 0 1 1)_{2} \\
 43411 & (1 0 1 0 1 0 0 1 1 0 0 1 0 0 1 1)_{2} \\
 52409 & (1 1 0 0 1 1 0 0 1 0 1 1 1 0 0 1)_{2} \\
 80723 & (1 0 0 1 1 1 0 1 1 0 1 0 1 0 0 1 1)_{2} \\
 374383 & (1 0 1 1 0 1 1 0 1 1 0 0 1 1 0 1 1 1 1)_{2} \\
 10717481 & (1 0 1 0 0 0 1 1 1 0 0 0 1 0 0 1 0 0 1 0 1 0 0 1)_{2} \\
 23629421 & (1 0 1 1 0 1 0 0 0 1 0 0 0 1 1 1 0 0 1 1 0 1 1 0 1)_{2} \\
 26528431 & (1 1 0 0 1 0 1 0 0 1 1 0 0 1 0 1 0 1 0 1 0 1 1 1 1)_{2} \\
 44767195 & (1 0 1 0 1 0 1 0 1 1 0 0 0 1 0 1 1 1 1 1 0 1 1 0 1 1)_{2} \\
 \hline
\end{array}
\end{equation*}
\begin{center}
Table 4. Solutions of the congruence (\ref{maincong}) for the pair $(r,m)=(2,3)$ and $n\leq 2^{40}$.
\end{center}

In Table 2 we wrote the solutions 19, 87211 of (\ref{maincong}) in bold. The reason is that we were able to extend them to infinite sequence and prove Theorem  \ref{30case} below. Indeed, let us note that
$$
19=2^{4}+2^{1}+2^{0},\quad 87211=2^{16} + 2^{14} + 2^{12} + 2^{10} + 2^{7} + 2^{5} + 2^{3} + 2^{1} + 2^{0}.
$$
This lead us to conjecture that maybe the sequence $(h_{n})_{n\in\N}$, where
$$
h_{n}=\sum_{i=1}^{n}2^{2i-1}+\sum_{i=n+1}^{2n}2^{2i}+1=\frac{2}{3}(2^{2n}-1)(2^{2n+1}+1)+1,
$$
contains more solutions of the congruence (\ref{maincong}). We have $h_{1}=19, h_{4}=87211$. As we will see our expectation is true. However, before we state the result we need to introduce the related sequence $(\alpha_{n})_{n\in\N}$. Here
$$
\alpha_{n}=\frac{1}{3}(2^{n}-(-1)^{n}),
$$
and the number $\alpha_{n}$ is called the $n$th Jacobstahl number \cite{Hor}. It has an amusing property which says that the maximum values of the Stern sequence in the interval $[0,2^{n-1}]$ is attained  at $s_{\alpha_{n}}$ (and also at $\beta_{n}=\frac{1}{3}(5\cdot 2^{n-2}+(-1)^{n})$). Moreover, $s_{\alpha_{n}}=F_{n}$ for $n\in\N_{+}$, where $F_{n}$ is the $n$th Fibonacci number.

It is known that the sequence $(B_{\alpha_{n}}(t))_{n\in\N}$ satisfies the following recurrence relation
$$
B_{\alpha_{0}}(t)=0,\quad B_{\alpha_{1}}(t)=1,\quad B_{\alpha_{n}}(t)=B_{\alpha_{n-1}}(t)+tB_{\alpha_{n-2}}(t).
$$
This is \cite[Lemma 2.2]{DKT}. One can easily check that the subsequence $(B_{\alpha_{2n}}(t))_{n\in\N}$ satisfies the recurrence relation
$$
B_{\alpha_{2n}}(t)=(2t+1)B_{\alpha_{2(n-1)}}(t)-t^{2}B_{\alpha_{2(n-2)}}(t).
$$

\begin{thm}\label{30case}
There are infinitely many $n\in\N$ such that $B_{n}(t)\equiv 1\pmod{3t(t+1)}$. More precisely, let $h_{n}=\frac{2}{3}(2^{2n}-1)(2^{2n+1}+1)+1$. Then
\begin{equation*}
B_{h_{\frac{3^{n}-1}{2}}}(t)\equiv 1\pmod{3t(t+1)}.
\end{equation*}
\end{thm}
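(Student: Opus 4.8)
My plan is to obtain an explicit formula for $B_{h_n}(t)$ in terms of the Stern polynomials evaluated at Jacobsthal numbers $\alpha_m=\tfrac13(2^m-(-1)^m)$, and then to split the congruence into a ``$t(t+1)$ part'' and a ``mod $3$ part'', the latter handled by a perfect-square identity together with the Frobenius map on $\F_3$. First I would record the elementary arithmetic identities (all immediate from $h_n=\tfrac13(2^{4n+2}-2^{2n+1}+1)$): $h_n=\alpha_{2n}\cdot 2^{2n+2}+\alpha_{2n+1}$, $\ 2^{2n+2}-\alpha_{2n+1}=2^{2n+1}+\alpha_{2n+2}$, $\ 2^{2n+1}-\alpha_{2n+2}=\alpha_{2n+1}$, $\ \alpha_{2n}+1=2(2\alpha_{2n-2}+1)$ and $2\alpha_{2n-2}+1=\alpha_{2n-1}$. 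Applying the first identity of Lemma \ref{lem1} with $a=2n+2$ and then again with $a=2n+1$, using $B_{2m}(t)=tB_m(t)$, and finally substituting $B_{\alpha_k}(t)=B_{\alpha_{k-1}}(t)+tB_{\alpha_{k-2}}(t)$ to eliminate $B_{\alpha_{2n+1}}(t)$ and $B_{\alpha_{2n+2}}(t)$, I expect to arrive at
\[
B_{h_n}(t)=(1+t+t^2)B_{\alpha_{2n}}(t)^2+(t^2+2t)B_{\alpha_{2n}}(t)B_{\alpha_{2n-1}}(t)+t^2B_{\alpha_{2n-1}}(t)^2
\]
(a sanity check: for $n=1$ this gives $B_{19}(t)=3t^2+3t+1$).

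For the factor $t(t+1)$ I would just evaluate this formula at $t=0$ and $t=-1$. At $t=0$ we get $B_{h_n}(0)=B_{\alpha_{2n}}(0)^2=1$, since $\alpha_{2n}$ is odd and the Stern polynomial of an odd index has constant term $1$. At $t=-1$ the sequence $(B_{\alpha_k}(-1))_{k\ge 0}$ is purely periodic of period $6$ — it reads $0,1,1,0,-1,-1$, directly from the recurrence at $t=-1$ — and checking the three residues $n\equiv 0,1,2\pmod 3$ one finds $B_{h_n}(-1)=0-0+1$, $1-1+1$, $1-0+0$, always $1$. Hence $B_{h_n}(t)-1$ is divisible by $t(t+1)$ in $\Z[t]$, for every $n$.

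For the factor $3$, note that in $\F_3[t]$ one has $1+t+t^2=(t-1)^2$ and $t^2+2t=-2t(t-1)$, so the formula above becomes a perfect square:
\[
B_{h_n}(t)\equiv\big((t-1)B_{\alpha_{2n}}(t)-tB_{\alpha_{2n-1}}(t)\big)^2\pmod 3 .
\]
Using $B_{\alpha_k}=B_{\alpha_{k-1}}+tB_{\alpha_{k-2}}$ once more, the inner polynomial equals $g_n(t):=t^2B_{\alpha_{2n-2}}(t)-B_{\alpha_{2n}}(t)$; being a fixed $\Z[t]$-linear combination of consecutive terms of the sequence $(B_{\alpha_{2m}})_m$ (which satisfies $x_{m+1}=(2t+1)x_m-t^2x_{m-1}$), the sequence $(g_n)$ obeys $g_{n+1}=(2t+1)g_n-t^2g_{n-1}$, with $g_1=-1$, $g_2=t^2-2t-1$. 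Now I would work in $\F_3[s]$, writing $t:=s^2-1$, and show that $\widetilde g_n:=(s+1)^{2n+1}-(s-1)^{2n+1}$ represents $g_n$ there: indeed $\widetilde g_n$ satisfies the same recurrence — because $(s+1)^2+(s-1)^2=2s^2+2\equiv 2t+1$ and $(s+1)^2(s-1)^2=(s^2-1)^2=t^2$ — and $\widetilde g_1\equiv 2\equiv g_1$, $\widetilde g_2\equiv g_2$, so $g_n\equiv(s+1)^{2n+1}-(s-1)^{2n+1}\pmod 3$ for all $n\ge 1$ (equivalently, $g_n\equiv 2\sum_{i=0}^n\binom{2n+1}{2i}(t+1)^i$). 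Finally, for $n=\tfrac{3^k-1}{2}$ we have $2n+1=3^k$, and the characteristic-$3$ identity $(s\pm 1)^{3^k}=s^{3^k}\pm 1$ gives $g_n\equiv(s^{3^k}+1)-(s^{3^k}-1)=2\equiv -1\pmod 3$ (equivalently, by Lucas only the $i=0$ binomial term survives), hence $B_{h_{(3^k-1)/2}}(t)\equiv(-1)^2=1\pmod 3$.

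Combining the two parts: for $n=\tfrac{3^k-1}{2}$ the polynomial $B_{h_n}(t)-1$ is divisible by $t(t+1)$ in $\Z[t]$ and has all coefficients divisible by $3$; writing $B_{h_n}(t)-1=t(t+1)Q(t)$ with $Q\in\Z[t]$ and reducing mod $3$, the relation $t(t+1)\overline Q=0$ in the domain $\F_3[t]$ forces $3\mid Q(t)$, so $B_{h_n}(t)\equiv 1\pmod{3t(t+1)}$; since $h_{(3^k-1)/2}$ is strictly increasing in $k$, this yields infinitely many such $n$. I expect the genuinely hard point to be the second half of the mod-$3$ step — recognizing that modulo $3$ the polynomial is the square of something with the closed form $(s+1)^{2n+1}-(s-1)^{2n+1}$, which is exactly the shape that collapses under Frobenius precisely when $2n+1$ is a power of $3$. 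By contrast, Step 1 is long but mechanical (two uses of Schinzel's identities plus Jacobsthal bookkeeping), and the $t(t+1)$ divisibility is a quick evaluation.
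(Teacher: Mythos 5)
Your proof is correct, and it shares its skeleton with the paper's: both begin by writing $h_{n}=2^{2n+2}\alpha_{2n}+2\alpha_{2n}+1$ and using Lemma \ref{lem1} to express $B_{h_{n}}(t)$ as a quadratic form in consecutive $B_{\alpha_{k}}(t)$ (your formula is exactly the paper's $(t^2+t+1)X^2+(t+2)XY+Y^2$ with $Y=B_{\alpha_{2n}+1}=tB_{\alpha_{2n-1}}$ substituted), both pass to a new variable in which $t$ becomes a quadratic (your $t=s^2-1$ is the mod-$3$ reduction of the paper's $t=\frac14(u^2-1)$), and both finish with Frobenius at $2n+1=3^{k}$. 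The genuine divergence is in the middle. The paper derives a third-order linear recurrence for $W_{n}=B_{h_{n}}(t)$ itself, factors the associated difference operator after the substitution, writes out the full closed form of $w_{n}(u)$, and only then reduces mod $3$ to reach $(u-1)^{4n+2}+(u+1)^{4n+2}+(u^2-1)^{2n+1}$. You instead observe that mod $3$ the quadratic form collapses to the perfect square $\bigl((t-1)B_{\alpha_{2n}}-tB_{\alpha_{2n-1}}\bigr)^2$, so it suffices to track the linear form $g_{n}=t^2B_{\alpha_{2n-2}}-B_{\alpha_{2n}}$, which inherits the known second-order Jacobsthal recurrence $x_{n+1}=(2t+1)x_{n}-t^2x_{n-1}$ and has the transparent closed form $(s+1)^{2n+1}-(s-1)^{2n+1}$ over $\F_3[s]$ (your initial-value checks $g_1=-1$, $g_2=t^2-2t-1$ are right). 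This buys a shorter computation — no order-three recurrence to guess and no explicit solution of $W_n$ — and makes it visible in advance why $2n+1$ being a power of $3$ is the right condition. You also gain something the paper waves away: an explicit verification that $t(t+1)\mid B_{h_n}(t)-1$ in $\Z[t]$, via $B_{h_n}(0)=1$ and the period-$6$ sequence $B_{\alpha_k}(-1)=0,1,1,0,-1,-1,\ldots$, together with the clean gluing step $t(t+1)\overline{Q}=0$ in the domain $\F_3[t]$. The one point to keep explicit in a write-up is that the substitution $t\mapsto s^2-1$ is injective on $\F_3[t]$, so a conclusion about $g_n(s^2-1)$ really does pull back to $g_n(t)$; you use this implicitly and the paper states the analogous equivalence for $\phi$.
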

\begin{proof}
We write $W_{n}(t)=B_{h_{n}}(t)\in\Z[t]$. It is clear that the only non-trivial thing to prove is the vanishing modulo 3 of the difference $B_{h_{\frac{3^{n}-1}{2}}}(t)-1$. However, before we prove this we need to express the polynomial $B_{h_{n}}(t)$ in a more convenient form. We write
$h_{n}=2^{2n+2}\alpha_{2n}+2\alpha_{2n}+1$ and apply the first identity from Lemma \ref{lem1} with $a=2n+2, m=\alpha_{2n}$ and $r=2\alpha_{2n}+1$. We thus get
\begin{align*}
B_{h_{n}}(t)&=B_{2^{2n+2}-2\alpha_{2n}-1}(t)B_{\alpha_{2n}}(t)+B_{2\alpha_{2n}+1}(t)B_{\alpha_{2n}+1}(t)\\
            &=B_{2(2^{2n+1}-\alpha_{2n}-1)+1}(t)B_{\alpha_{2n}}(t)+(B_{\alpha_{2n}}(t)+B_{\alpha_{2n}+1}(t))B_{\alpha_{2n}+1}(t)\\
            &=(B_{2\cdot2^{2n}-(\alpha_{2n}+1)}(t)+B_{2^{2n+1}-\alpha_{2n}}(t))B_{\alpha_{2n}}+(B_{\alpha_{2n}}(t)+B_{\alpha_{2n}+1}(t))B_{\alpha_{2n}+1}(t)\\
            &=(B_{2^{2n}-\alpha_{2n}-1}(t)B_{2}(t)+B_{\alpha_{2n}+1}(t)B_{1}(t)+B_{2^{2n}-\alpha_{2n}}(t)B_{2}(t)\\
            &\quad +B_{\alpha_{2n}+1}(t)B_{1}(t))B_{\alpha_{2n}}(t)+(B_{\alpha_{2n}}(t)+B_{\alpha_{2n}+1}(t))B_{\alpha_{2n}+1}(t)\\
            &=(tB_{2\alpha_{2n}}(t)+B_{\alpha_{2n}+1}(t)+tB_{2\alpha_{2n}+1}(t)+B_{\alpha_{2n}}(t))B_{\alpha_{2n}}(t)+(B_{\alpha_{2n}}(t)\\
            &\quad +B_{\alpha_{2n}+1}(t))B_{\alpha_{2n}+1}(t)\\
            &=(t^2+t+1)B_{\alpha_{2n}}(t)^2+(t+2)B_{\alpha_{2n}}(t)B_{\alpha_{2n}+1}(t)+B_{\alpha_{2n}+1}(t)^2.\\
\end{align*}
Now let us observe that the sequence $(\alpha_{2n})_{n\in\N}$ satisfies the recurrence relation $\alpha_{2(n+1)}=4\alpha_{2n}+1$. Thus
\begin{equation*}
B_{\alpha_{2(n+1)}}(t)=B_{4\alpha_{2n}+1}(t)=(t+1)B_{\alpha_{2n}}(t)+B_{\alpha_{2n}+1}(t)
\end{equation*}
and we can express the polynomial $B_{\alpha_{2n}+1}(t)$ in the following form
\begin{equation*}
B_{\alpha_{2n}+1}(t)=B_{\alpha_{2(n+1)}}(t)-(t+1)B_{\alpha_{2n}}(t).
\end{equation*}
Finally, we write $B_{h_{n}}(t)=F(B_{\alpha_{2n}}(t), B_{\alpha_{2(n+1)}}(t))$, where
$$
F(X,Y)=(t^2+t+1)X^2-(t+1)XY+Y^2.
$$
Now let us recall that the sequence $(B_{\alpha_{2n}}(t))_{n\in\N}$ satisfies a linear difference equation of degree 2. Consequently, the sequence ($B_{h_{n}}(t))_{n\in\N}$ needs to satisfy a linear difference equation of degree bounded by 4. It is an easy task for the computer to find the corresponding recurrence. Indeed, let us write $W_{n}(t)=B_{h_{n}}(t)$ and observe that the sequence $(W_{n}(t))_{n\in\N}$ satisfies the following recurrence relation:
\begin{equation*}
\begin{cases}
\begin{array}{lll}
  W_{0}(t) & = & 1, \\
  W_{1}(t) & = & 3t^2+t+1, \\
  W_{2}(t) & = & 7t^4+17t^3+7t^2+7t+1, \\
  W_{n}(t) & = & (3t^2+4t+1)W_{n-1}(t)-t^2(3t^2+4t+1)W_{n-2}(t)+t^6W_{n-3}(t).
\end{array}
\end{cases}
\end{equation*}
The sequence $(W_{n}(t))_{n\in\N}$ is in the null space of the linear difference operator $\Theta_{t}$, where
\begin{equation*}\label{theta}
\Theta_{t}=(T-t^2)(T^2-(2t^2+4t+1)T+t^4),
\end{equation*}
where $T$ is the (forward) shift operator, i.e., $T((a_{n})_{n\in\N})=(a_{n+1})_{n\in\N}$. Let us observe that the discriminant of the quadratic factor of $\Theta_{t}$ is $(1 + 2 t)^2 (1 + 4 t)$. Thus, the idea is to introduce a new variable $u$ connected with $t$ by the equality $t=\phi(u)=\frac{1}{4}(u^2-1)$. Let us observe that the reduction modulo 3 does not change the degree of $\phi$ (here we understood $\phi$ as a polynomial in the variable $u$), and we obtain the equivalence
$$
W_{n}(t)\equiv 1\pmod{3}\;\Longleftrightarrow\;W_{n}(\phi(u))\equiv 1\pmod{3}.
$$
With the substitution $t=\phi(u)$ we obtain the factored form of the corresponding difference operator
$$
\Theta_{\phi(u)}=\left(T-\frac{1}{16}(u-1)^4\right)\left(T-\frac{1}{16}(u+1)^4\right)\left(T-\frac{1}{16}\left(u^2-1\right)^2\right).
$$
If we write $w_{n}(u)=W_{n}\left(\frac{1}{4}(u^2-1)\right)$, then the operator $\Theta_{\phi(u)}$ anihilates the sequence $(w_{n}(u))_{n\in\N}$ and splits into linear factors. Thus, using standard methods of solving linear difference equations with constant coefficients, we obtain the expression for $w_{n}(u)$ in the following (closed) form:
$$
w_{n}(u)=\frac{(u-1)^2(u^2+3)}{16u^2}\left(\frac{u-1}{2}\right)^{4n}+\frac{(u+1)^2(u^2+3)}{16u^2}\left(\frac{u+1}{2}\right)^{4n}-\frac{(u^2-3)(u^2-1)}{8u^2}\left(\frac{u^2-1}{4}\right)^{2n}.
$$
Moreover, reducing modulo 3 we immediately get the congruence $w_{n}(u)\equiv (u-1)^{4n+2}+(u+1)^{4n+2}+(u^2-1)^{2n+1}\pmod{3}$. In order to finish the proof of our theorem it is enough to prove that $w_{\frac{3^{n}-1}{2}}(u)\equiv 1\pmod{3}$. Let us recall that $(p+q)^{3^{k}}\equiv p^{3^{k}}+q^{3^{k}}\pmod{3}$ for any $k\in\N$ and variables $p, q$. We thus have
\begin{align*}
w_{\frac{3^{n}-1}{2}}(u)&\equiv (u-1)^{2\cdot 3^{n}}+(u+1)^{2\cdot 3^{n}}+(u^2-1)^{3^{n}}\\
                        &\equiv (u^{3^{n}}-1)^2+(u^{3^{n}}+1)^2+u^{2\cdot 3^{n}}-1\equiv 1\pmod{3}.
\end{align*}

We are left with the problem of showing that the set of coefficients of all polynomials $B_{h_{(3^{n}-1)/2}}(t)$ is infinite. However, this is an immediate consequence of the explicit form of $B_{\alpha_{n}}(t)$ obtained in \cite{DKT}. More precisely, we have
\begin{equation*}
B_{\alpha_{n}}(t)=\sum_{j=0}^{\lfloor\frac{n-1}{2}\rfloor}\binom{n-1-j}{j}t^{j}.
\end{equation*}
In particular the set of coefficients of all $B_{\alpha_{n}}(t), n\in\N$ is infinite and it is clear that the same statement is true for the set of coefficients of polynomials
$$
B_{h_{n}}(t)=(t^2+t+1)B_{\alpha_{2n}}(t)^2+(t+2)B_{\alpha_{2n}}(t)B_{\alpha_{2n}+1}(t)+B_{\alpha_{2n}+1}(t)^2, n\in\N
$$
and thus the set of coefficients of the polynomials $B_{h_{(3^{n}-1)/2}}(t), n\in\N$.
\end{proof}

As an immediate consequence of the existence of the sequence constructed in the proof above we get

\begin{cor}
We have $\Pi_{0,3}(x)\geq \log_{3}\log_{2}\left(\frac{1}{2}(1+\sqrt{3(3+4x)})\right)$.
\end{cor}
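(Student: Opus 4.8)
The plan is to read the estimate off directly from Theorem~\ref{30case}: that theorem hands us an explicit infinite family of solutions of (\ref{maincong}) for $m=3$, $r=0$, and the corollary is just a count of how many of them lie below $x$. For $k\in\N$ put $n_{k}:=h_{(3^{k}-1)/2}$. By Theorem~\ref{30case} we have $B_{n_{k}}(t)\equiv 1\pmod{3t(t+1)}$; reducing this modulo $3$ gives $B_{n_{k}}(t)\equiv 1\pmod{3}$, which is exactly congruence (\ref{maincong}) with $m=3$, $r=0$. Each $n_{k}$ is automatically odd, since an even index $n$ would force $B_{n}(t)=tB_{n/2}(t)$ to have vanishing constant term, incompatible with $B_{n}(t)\equiv 1$. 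Moreover $k\mapsto n_{k}$ is strictly increasing, so the $n_{k}$ are pairwise distinct, and hence
\begin{equation*}
\Pi_{0,3}(x)\;\geq\;\#\{k\in\N:\;n_{k}\leq x\}.
\end{equation*}

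Next I would put $n_{k}$ in closed form. Writing $a=2^{3^{k}}$ and substituting $n=(3^{k}-1)/2$ into $h_{n}=\tfrac{2}{3}(2^{2n}-1)(2^{2n+1}+1)+1$ (so that $2^{2n}=a/2$ and $2^{2n+1}=a$), a one-line computation gives
\begin{equation*}
n_{k}=\frac{a^{2}-a+1}{3},\qquad a=2^{3^{k}}.
\end{equation*}
As a sanity check, $k=1$ yields $a=8$, $n_{1}=19$, and $k=2$ yields $a=512$, $n_{2}=87211$, the two boldface entries of Table~2.

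It remains to count. Since $3n_{k}=a^{2}-a+1$ with $a=2^{3^{k}}$, the inequality $n_{k}\leq x$ is equivalent to $a^{2}-a+1\leq 3x$, i.e. to $2^{3^{k}}\leq\tfrac12\bigl(1+\sqrt{12x-3}\bigr)$ (the relevant positive root of the quadratic). Taking $\log_{2}$ and then $\log_{3}$, and counting the admissible values $k=0,1,2,\dots$ (note $n_{0}=1\leq x$ once $x\geq 1$), we obtain
\begin{equation*}
\Pi_{0,3}(x)\;\geq\;1+\Bigl\lfloor\log_{3}\log_{2}\bigl(\tfrac12(1+\sqrt{12x-3})\bigr)\Bigr\rfloor .
\end{equation*}
A short elementary estimate — the ``$+1$'' together with the floor absorbs the difference between the radicands $12x-3=3(4x-1)$ and $3(3+4x)$ — then yields the bound $\Pi_{0,3}(x)\geq\log_{3}\log_{2}\bigl(\tfrac12(1+\sqrt{3(3+4x)})\bigr)$ exactly as stated.

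There is no real obstacle here: all the substance is already in Theorem~\ref{30case}. The only points needing a moment's care are the (routine) parity remark ensuring the $n_{k}$ are genuine odd solutions counted by $\Pi_{0,3}$, the one-line algebraic reduction of $h_{(3^{k}-1)/2}$ to $(a^{2}-a+1)/3$, and the purely elementary bookkeeping with the quadratic in $2^{3^{k}}$ and the two logarithms — in particular the harmless passage from the exact root of $a^{2}-a+1=3x$ to the slightly cleaner expression $\tfrac12(1+\sqrt{3(3+4x)})$, and the inclusion of the trivial term $k=0$.
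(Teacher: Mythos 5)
Your argument is exactly the paper's: the author's entire proof reads ``Just solve the inequality $h_{(3^{n}-1)/2}\leq x$ with respect to $n$,'' and your closed form $n_{k}=(a^{2}-a+1)/3$ with $a=2^{3^{k}}$ together with the subsequent counting is precisely that computation carried out in detail. The one loose point --- massaging the exact radicand $12x-3$ into the stated $3(3+4x)$ via the floor and the $+1$ --- is no less rigorous than the paper's one-line treatment, so the proposal is fine.
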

\begin{proof}
Just solve the inequality $h_{(3^{n}-1)/2}\leq x$ with respect to $n$.
\end{proof}

\begin{rem}
{\rm The numbers $H_{n}=h_{\frac{3^{n}-1}{2}}, n\in\N$, grow very quickly. The first five elements for $n=0,\ldots, 4$ are as follows
$$
1, 19, 87211, 6004799458421419, 1948668849774537224271578971004497616455126919851,\ldots .
$$
The growth of the sequence $H_{n}$ is double exponential and a question arises whether it is possible to find an increasing sequence, say $(u_{n})_{\in\N}$, of positive integers with exponential growth and such that for each $n\in\N$ the number $u_{n}$ is a solution of (\ref{maincong}) for $(r,m)=(0,3)$ and it is non-trivial.
}
\end{rem}

\section{Final remarks}\label{sec4}

We finish the paper with report of related computations and formulate some questions and conjectures.

We performed numerical experiments in order to find solutions of (\ref{maincong}) for $m\in\{4,\ldots,10\}$ and $r\in\{0,\ldots,m-1\}$ with $n<2^{40}$. In this range we found very few solutions which can be found in the table below.

\begin{equation*}
\begin{array}{|l|l|l|l|}
\hline
  (r,m) & n & \mbox{binary expansion of}\;n & B_{n}(t) \\
  \hline
  (2,m) & 13  & (1101)_{2}       & 2t^2+2t+1\\
  (3,m) & 19  & (10011)_{2}      & 3t^2+3t+1\\
  (4,5) & 205 & (11001101)_{2}   & 5 t^4+10 t^3+9 t^2+4 t+1\\
  (0,5) & 211 & (11010011)_{2}   & 5 t^4+10 t^3+10 t^2+5 t+1 \\
  (2,5) & 331 & (101001011)_{2}  & 7 t^4+17 t^3+17 t^2+7 t+1 \\
  (2,4) & 629 & (1001110101)_{2} & 6 t^5+14 t^4+18 t^3+14 t^2+6 t+1 \\
  (4,5) & 925 & (1110011101)_{2} & 4 t^6+9 t^5+14 t^4+14 t^3+9 t^2+4 t+1\\
  \hline
\end{array}
\end{equation*}
\begin{center}
Table 5.  Solutions of (\ref{maincong}) for some pairs $(r,m)$ with $m\in\{4,\ldots, 10\}, r\in\{0,\ldots, m-1\}$ and $n<2^{40}$.
\end{center}

In the light of our numerical results we formulate the following.

\begin{ques}
Let $m\in\N_{\geq 4}$ and $r\in\{0,\ldots,m-1\}$ be given. Is it true that the set of all solutions of {\rm (\ref{maincong})} is contained in finitely many trivial families?
\end{ques}

\bigskip

Now we turn our attention to polynomials related to the sequences considered in Section \ref{sec2}. More precisely, in Section \ref{sec2}, for $k\in\N_{\geq 2}$, we considered the sequences $(p_{k,n})_{n\in\N_{+}}$, where
$$
p_{k,n}=2^{2n+k}-3\cdot2^{n+k-1}+2^{k}-3.
$$

Let us note that for each $n\in\N$ the polynomials $B_{p_{2,n}}(t), B_{p_{3,n}}(t)$ are irreducible. In order to see this it is enough to apply Eisenstein's criterion to the polynomial $F_{i,n}(t)=t^{e(p_{i,n})}B_{p_{i,n}}\left(\frac{1}{t}\right), i=2,3$. Indeed, the leading coefficient of $F_{i,n}$ is equal to 1, the constant coefficient is equal to 2 (and thus is not divisible by 4), and all other coefficients are divisible by 2. Thus, we apply Eistenstein's criterion with $q=2$ and get irreducibility of $F_{i,n}(t)$ and thus irreducibility of the polynomial $B_{p_{i,n}}(t)$ for $i=2,3$. It should be noted that this simple approach does not work for all polynomials $B_{p_{k,n}}(t)$ with $k\in\N_{\geq 2}$ and $n\in\N_{+}$ (however, see the conjecture below).

Our computer experiments with $2\leq k\leq 10$ and $n\leq 10^3$ suggest that the following conjectures are true.
\begin{conj}
Let
$$
N_{k}(n)=|\{t\in\R:\;B_{p_{k,n}}(t)=0\}|.
$$
\begin{enumerate}
\item[(1)] If $k\equiv 0\pmod{2}$, then there is $c_{k}\in\N_{+}$ such that the polynomial $B_{p_{k,n}}(t)$ has exactly one real root for $n\in\N_{\geq c_{k}}$. Moreover, under the same assumptions on $k, n$, the function $y=B_{p_{k,n}}(t)$ is increasing for $t\in\R$. In particular, if $k=1$, then $c_{2}=1$, $c_{2i}=3$ for $i=2,\ldots,16$ (with exactly {\rm 2} real roots of the polynomial $B_{p_{2i,2}})$ , $c_{34}=6,\ldots$.

\item[(2)] If $k\equiv 1\pmod{2}$, then there is a number $C_{k}$ such that for each $n\in\N_{+}$ we have $N_{k}(n)\leq C_{k}$.

\item[(3)] The polynomial $B_{p_{k,n}}(t)$ is reducible if and only if $k\in\N_{\geq 3}$ and $n=k-1$. Then we have the identity
$$
B_{p_{k,k-1}}(t)=\left(1+2t\frac{t^{k-2}-1}{t-1}\right)(B_{2,k-2}(t)+2t^{k-2}(1+t))
$$
and both factors are irreducible in $\Q[t]$.
\end{enumerate}
\end{conj}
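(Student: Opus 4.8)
The three parts are essentially independent, and only the ``only if'' half of part (3) is a genuine obstacle.

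For the factorization in part (3) I would begin from the remark that $p_{k,k-1}=(2^{2k-2}+1)(2^{k}-3)=(2^{k}-3)\cdot 2^{2k-2}+(2^{k}-3)$, and apply the first identity of Lemma \ref{lem1} with $a=2k-2$ and $m=r=2^{k}-3$ (legitimate, since $2^{k}-3\le 2^{2k-2}$ for $k\ge 2$). This extracts the factor $B_{2^{k}-3}(t)=1+2t\frac{t^{k-2}-1}{t-1}$ and leaves the cofactor $B_{2^{2k-2}-2^{k}+3}(t)+B_{2^{k}-2}(t)$. Applying Lemma \ref{lem1} once more, now to $2^{2k-2}-2^{k}+3=2^{k-2}\cdot 2^{k}-(2^{k}-3)$, together with $B_{2^{k}-2}(t)=t\frac{t^{k-1}-1}{t-1}$, the identities of Lemma \ref{lem2}, and the closed form of $B_{2,k-2}(t)=B_{p_{2,k-2}}(t)$ obtained in the proof of the first theorem of Section \ref{sec2}, one rewrites this cofactor as $B_{2,k-2}(t)+2t^{k-2}(1+t)$; this last step is a pure rational-function identity. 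Irreducibility of the two factors then follows from Eisenstein's criterion at $q=2$ applied to their reciprocal polynomials, exactly as for $k\in\{2,3\}$ in the text: each reciprocal is monic with constant term $2$ (hence not divisible by $4$) and all remaining coefficients even, the evenness coming from $B_{p_{k,n}}(t)\equiv 1\pmod 2$.

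The ``only if'' direction is the crux. For $k\in\{2,3\}$ and for the finitely many small $n$ where the degree formula degenerates it is the computation already in the text, so the remaining task is to prove $B_{p_{k,n}}(t)$ irreducible for all $k\ge 4$, $n\ge 3$, $n\ne k-1$. Here the elementary route provably fails: from the recursion $c_{i,k}=c_{i,2}+\sum_{j=0}^{k-3}(c_{i-j,3}-c_{i-j,2})$ and the table of differences $c_{j,3,n}-c_{j,2,n}$ in the proof of the first theorem of Section \ref{sec2}, the leading coefficient of $B_{p_{k,n}}(t)$ is $6$ for $k=4,n\ge 4$ (where Eisenstein at $2$ still works) but equals $4$ for every $k\ge 5$, so Eisenstein at $2$ no longer applies. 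A $2$-adic Newton polygon of the reciprocal only goes partway: the low-order coefficients of $B_{p_{k,n}}$ have controlled valuations (first $0$, then alternately $1$ and $\ge 4$) and the high-order ones have valuation $v_{2}(2(\ell+1)(\ell+2))$, which constrains the $\Q_{2}$-factorization, but the uncontrolled middle coefficients leave room for a possible rational factor of degree $\approx n$. The realistic plan is therefore to pass to the representation $B_{p_{k,n}}(t)=\big(p_{k}(t)t^{2n}+q_{k}(t)t^{n}+r_{k}(t)\big)/(t-1)^{3}$, valid because the sequence $(B_{p_{k,n}}(t))_{n}$ lies in the kernel of $(T-t^{2})(T-t)(T-1)$ with $T$ the forward shift in $n$, to prove $p_{k}(t)X^{2}+q_{k}(t)X+r_{k}(t)\in\Q(t)[X]$ irreducible, and then to invoke a Capelli/Schinzel-type theorem yielding irreducibility of $G(t^{n})$ for all but finitely many $n$, checking that the exceptional set reduces precisely to $\{k-1\}$. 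The stray rational roots such theorems leave aside are eliminated directly: $B_{p_{k,n}}(\pm 1)$ is odd (again by $B_{p_{k,n}}(t)\equiv 1\pmod 2$), and the only remaining candidates $\pm 2^{j}$ are excluded by a short growth estimate from the $p_{k}t^{2n}+q_{k}t^{n}+r_{k}$ form. Establishing the clean bivariate irreducibility together with the exact exceptional set is the main obstacle.

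For parts (1) and (2) the same representation is the right tool. Since $B_{p_{k,n}}$ has nonnegative coefficients and constant term $1$, monotonicity and the absence of real roots on $t\ge 0$ are immediate, so the whole question lives on $t<0$. On $(-\infty,-1)$ the term $p_{k}(t)t^{2n}$ dominates, and on $(-1,0)$ one has $|t^{n}|\to 0$ uniformly on compacta, so away from $t=-1$ the sign of $B_{p_{k,n}}'$ and the count of its real zeros are governed by the fixed rational functions $p_{k},q_{k},r_{k}$ up to an error of size $O(n\,|t|^{n})$. For $k$ even, $\deg B_{p_{k,n}}=2n+k-5$ is odd with positive leading coefficient, so one real root always exists; proving $B_{p_{k,n}}'\ge 0$ on all of $\R$ for $n\ge c_{k}$ then reduces to a sign analysis of $p_{k},q_{k},r_{k}$ near $t=-1$ plus a finite residual computation pinning down $c_{k}$ (and the anomalous extra root of $B_{p_{2i,2}}$). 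For $k$ odd one instead bounds the number of sign changes of the limiting configuration uniformly in $n$. The delicate point — and the main obstacle in parts (1)--(2) — is the transition region $t\approx -1$, where the three terms $p_{k}t^{2n}$, $q_{k}t^{n}$, $r_{k}$ are of comparable size, and which must be handled carefully to extract the explicit thresholds claimed in the statement.
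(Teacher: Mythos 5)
First, a point of calibration: the paper does not prove this statement. It is stated as a conjecture, supported only by computer experiments with $2\le k\le 10$ and $n\le 10^{3}$; the only fragment the paper actually establishes is the remark following the conjecture, namely that Eisenstein's criterion at $q=2$ gives irreducibility of the two displayed factors of $B_{p_{k,k-1}}(t)$. So there is no proof in the paper to compare yours against, and your proposal must stand on its own. The one piece of your argument that is complete and correct is the derivation of the factorization in part (3): writing $p_{k,k-1}=(2^{k}-3)\cdot 2^{2k-2}+(2^{k}-3)$ and applying the first identity of Lemma \ref{lem1} with $a=2k-2$, $m=r=2^{k}-3$ does extract the common factor $B_{2^{k}-3}(t)=1+2t\frac{t^{k-2}-1}{t-1}$ (using the second identity of Lemma \ref{lem2}), leaving the cofactor $B_{2^{2k-2}-2^{k}+3}(t)+B_{2^{k}-2}(t)$; this checks out numerically (e.g.\ $k=3$: $p_{3,2}=85$ and $B_{85}(t)=(1+2t)(2t^{2}+4t+1)$), and together with the Eisenstein remark it actually supplies the ``if'' half of part (3), which is more than the paper does. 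You should, however, still verify the rational-function identity matching the cofactor with $B_{p_{2,k-2}}(t)+2t^{k-2}(1+t)$; you assert it but do not carry it out.

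Everything else in the proposal is a research plan rather than a proof, and you say so yourself. For the ``only if'' direction of (3) you correctly diagnose that Eisenstein fails for $k\ge 5$ and propose passing to $B_{p_{k,n}}(t)=\bigl(p_{k}(t)t^{2n}+q_{k}(t)t^{n}+r_{k}(t)\bigr)/(t-1)^{3}$ and invoking a Capelli/Schinzel-type theorem, but you do not prove the bivariate irreducibility of $p_{k}(t)X^{2}+q_{k}(t)X+r_{k}(t)$, you do not control the exceptional set of $n$ (which such theorems typically describe only up to finitely many unspecified exceptions, whereas the conjecture demands it be exactly $\{k-1\}$), and the ``uncontrolled middle coefficients'' you mention are precisely where a proof would have to do real work. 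For parts (1) and (2) the asymptotic picture away from $t=-1$ is plausible, but the entire difficulty sits in the transition region $t\approx -1$ where the three terms are comparable, and you explicitly leave that, together with the extraction of the specific thresholds $c_{2}=1$, $c_{2i}=3$, $c_{34}=6$, as an obstacle. In short: you have proved the factorization identity and the irreducibility of its factors, which settles one direction of part (3); parts (1), (2), and the converse of (3) remain open in your write-up, exactly as they remain open in the paper.
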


Note that Eisenstein's criterion with $q=2$ immediately implies the irreducibility of the two factors of $B_{p_{k-1,k}}(t)$ presented in the conjecture above.

Next, let us note that for each $i\in\{0,1,2,3\}$, the polynomial $B_{s_{i,n}}(t)$ is reducible due to the fact that it is divisible by $t+1$. Indeed, we know that $B_{n}(-1)=0$ if and only if $n\equiv 0\pmod{3}$. It is an easy exercise to show that $s_{i,n}\equiv 0\pmod{3}$ for $i\in\{0,1,2,3\}$ and $n\in\N_{+}$, and hence $B_{s_{i,n}}(-1)=0$. However, one can also note the following identities:
\begin{align*}
B_{s_{1,2n}}(t)  &=(t+1)\left(1+3t\frac{t^{2n}-1}{t-1}-2t^{2n}\right)\left(1+3t\frac{t^{2n-1}-1}{t-1}-2t^{3}\frac{t^{2(n-1)}-1}{t^2-1}+t^{2n}\right),\\
B_{s_{1,2n+1}}(t)&=(t+1)\left(1+2t^2+t\frac{t^{2(n+1)}-1}{t-1}-t^{2n+1}(3t+2)\right)\left(1+2t\frac{t^{2n-1}-1}{t-1}-t^{2}\frac{t^{2(n-1)}-1}{t^2-1}+t^{2n}\right).\\
\end{align*}

\begin{conj}
\begin{enumerate}
\item[(1)] If $i=0,2,3$, then the polynomial $B_{s_{i,n}}(t)$ has exactly one real root for $n\in\N_{\geq 2}$.
\item[(2)] If $i=0,2,3$, then the polynomial $B_{s_{i,n}}(t)/(t+1)$ is irreducible in $\Q[t]$.
\item[(3)] The polynomial $B_{s_{1,n}}(t)$ has exactly three real roots for $n\in\N_{\geq 3}$.
\item[(4)] The factors of the polynomial $B_{s_{1,n}}(t)$ are irreducible (here we mean the factors presented before the statement).
\item[(3)] The function $y=B_{s_{0,n}}(t)$ is increasing for $t\in\R$.
\end{enumerate}
\end{conj}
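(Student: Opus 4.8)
The conjecture has an analytic half --- the real-root counts in (1) and (3) together with the monotonicity of $y=B_{s_{0,n}}(t)$ --- and an arithmetic half --- the irreducibility statements (2) and (4); the plan is to attack these with different tools. The common preliminary step is to produce closed forms for $B_{s_{i,n}}(t)$, $i=1,2,3$, parallel to the expansion of $B_{s_{0,n}}(t)$ derived in the proof of the theorem for $m=2$, $r=1$: this is the same computation, namely applying the appropriate identity of Lemma \ref{lem1} to split $s_{i,n}$ at a suitable power of two and then substituting the identities of Lemma \ref{lem2}, exactly as was done there for $i=0$. For $i=1$ the two factorizations of $B_{s_{1,2n}}(t)$ and $B_{s_{1,2n+1}}(t)$ displayed just before the conjecture already isolate the non-trivial factors, so one works with those directly.

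The monotonicity claim for $B_{s_{0,n}}(t)$ is the hub of the analytic half. Clearing the denominator $(t-1)^{2}$ from the closed form of $B_{s_{0,n}}(t)$ and differentiating reduces it to showing that an explicit integer polynomial $N_{n}(t)$ is strictly positive on all of $\R$. Since $(B_{s_{0,n}}(t))_{n\in\N}$ lies in the kernel of a fixed finite-order linear difference operator in $n$ --- exactly the mechanism exploited in the proof of Theorem \ref{30case} --- so does $(N_{n}(t))_{n\in\N}$, and the natural route is an induction on $n$: verify a few base cases, observe that the leading coefficient of $N_{n}$ in $t$ is positive and that $N_{n}(1)>0$, and show the recurrence propagates positivity. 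Granting monotonicity, the equalities $B_{s_{0,n}}(0)=1$ and $B_{s_{0,n}}(t)\to-\infty$ as $t\to-\infty$ (the leading term being $t^{2n+1}$) force exactly one real root, which is simultaneously the $i=0$ case of (1) and the monotonicity assertion. For $i=2,3$ I would argue identically from the closed forms; should global monotonicity fail there, the coefficient shape visible already for $i=0$ --- after the constant term $1$, a single increasing run followed by a single decreasing run --- is precisely what is needed to bound the number of sign changes of $B_{s_{i,n}}(-t)$ and conclude a single real root by Descartes' rule or a short Sturm computation. For (3) one first divides out the known factor $t+1$ (recall that $3\mid s_{1,n}$ and $B_{k}(-1)=0$ exactly when $3\mid k$) and then applies the same count to the two explicit factors of $B_{s_{1,n}}(t)$, checking that their real roots, together with $t=-1$, number three.

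The hard part will be the irreducibility statements (2) and (4). The Eisenstein argument that settles $B_{p_{2,n}}(t)$ and $B_{p_{3,n}}(t)$ relies on the reversed polynomial having constant term exactly $2$ with every other coefficient even, and this breaks down for the $B_{s_{i,n}}$. The route I would try is a Newton-polygon analysis at the prime $2$ of the reversed polynomial of $B_{s_{i,n}}(t)/(t+1)$ (and of the two factors of $B_{s_{1,n}}(t)$): if, for each residue class of $n$ that actually occurs, the $2$-adic Newton polygon consists of a single segment whose slope has numerator coprime to its length, irreducibility follows at once. The obstruction is that this requires precise control of the $2$-adic valuations of the coefficients of $B_{s_{i,n}}(t)$ --- that is, of certain hyperbinary-representation counts --- as functions of $n$, and nothing in the paper supplies such a formula; obtaining one appears genuinely delicate. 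Fallbacks such as reduction modulo an auxiliary prime, or trying to constrain factorizations via the integer specialization $B_{s_{i,n}}(2)=s_{i,n}$, run into the familiar difficulty that the reductions and the integers $s_{i,n}$ factor in unpredictable ways. It is this missing arithmetic input that, in my estimation, keeps (2) and (4) at the level of a conjecture, while the analytic parts should be within reach.
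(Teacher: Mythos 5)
The statement you are asked about is a \emph{conjecture}: the paper offers no proof of any of its parts, only numerical evidence for $2\le k\le 10$, $n\le 10^{3}$, together with the explicit factorizations of $B_{s_{1,n}}(t)$ displayed just before it. So there is no argument of the author's to compare yours against, and your submission is, by your own account, a programme rather than a proof --- you concede outright that (2) and (4) are out of reach, which is an honest and accurate assessment. What you do claim is that the analytic half (the root counts and the monotonicity of $B_{s_{0,n}}$) ``should be within reach,'' and there the sketch has two concrete soft spots.

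First, your fallback via Descartes' rule is wrong as stated. From the paper's expansion, every coefficient of $B_{s_{0,n}}(t)$ is a positive odd integer, so $B_{s_{0,n}}(-t)$ has fully alternating signs and exactly $2n+1$ sign changes regardless of the unimodal shape of the coefficients; Descartes then gives only the useless upper bound $2n+1$ (and the parity information that the number of negative roots is odd, i.e.\ at least one). The ``single increasing run followed by a single decreasing run'' controls magnitudes, not signs, and feeds into results like Enestr\"om--Kakeya (which bound the \emph{moduli} of the roots), not into a count of real roots. Second, your primary route --- showing the integer polynomial $N_{n}(t)=\bigl((t-1)^{2}B_{s_{0,n}}(t)\bigr)'$-type numerator is positive on $\R$ by induction along the three-term recurrence --- leaves unexamined exactly the step that matters: the recurrence coefficients (such as $3t^{2}+4t+1$ and $-t^{2}(3t^{2}+4t+1)$) change sign on $\R$, so positivity of $N_{n-1}$ and $N_{n-2}$ does not propagate to $N_{n}$ by any termwise argument, and you would need a genuinely different invariant (e.g.\ explicit closed forms for the coefficients of $N_{n}$, or an interlacing/real-rootedness argument) to close the induction. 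Until one of these is supplied, even the analytic parts remain at the level of the conjecture they are.
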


We finish with the following.

\begin{conj}
Let $h_{n}=\frac{2}{3}(2^{2n}-1)(2^{2n+1}+1)+1$. For each $n\in\N$ the polynomial $B_{h_{n}}(t)$ is completely complex, i.e., the equation $B_{h_{n}}(t)=0$ has no real roots.
\end{conj}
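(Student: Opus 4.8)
The plan is to prove the stronger assertion that $B_{h_{n}}(t)>0$ for every real $t$; this at once gives that $B_{h_{n}}$ has no real zero. The case $n=0$ is trivial ($h_{0}=1$, $B_{1}(t)=1$), so fix $n\geq 1$. By the hyperbinary interpretation the coefficients of $B_{h_{n}}(t)$ are nonnegative, and its constant term equals $1$ because $h_{n}$ is odd; hence $B_{h_{n}}(t)\geq 1>0$ for all $t\geq 0$, and only $t<0$ needs attention. The engine is the identity established in the proof of Theorem~\ref{30case},
\begin{equation*}
B_{h_{n}}(t)=(t^{2}+t+1)B_{\alpha_{2n}}(t)^{2}+(t+2)B_{\alpha_{2n}}(t)B_{\alpha_{2n}+1}(t)+B_{\alpha_{2n}+1}(t)^{2},
\end{equation*}
which presents $B_{h_{n}}(t)$ as the value at $(X,Z)=(B_{\alpha_{2n}}(t),B_{\alpha_{2n}+1}(t))$ of the binary quadratic form $Q_{t}(X,Z)=(t^{2}+t+1)X^{2}+(t+2)XZ+Z^{2}$. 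Its discriminant is $(t+2)^{2}-4(t^{2}+t+1)=-3t^{2}$, which is negative for $t\ne 0$, while its leading coefficient $t^{2}+t+1$ is always positive; hence $Q_{t}$ is positive definite for every $t\ne 0$. Therefore, for $t\ne 0$, $B_{h_{n}}(t)=Q_{t}(B_{\alpha_{2n}}(t),B_{\alpha_{2n}+1}(t))>0$ unless $B_{\alpha_{2n}}(t)=B_{\alpha_{2n}+1}(t)=0$.

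It thus remains to rule out a common zero of $B_{\alpha_{2n}}$ and $B_{\alpha_{2n}+1}$, and I would get this from the general fact that consecutive Stern polynomials are coprime in $\Q[t]$: $\gcd(B_{m}(t),B_{m+1}(t))=1$ for all $m\in\N_{+}$. This is proved by strong induction on $m$ using the defining recursion. If $m=2n$ then $(B_{m},B_{m+1})=(tB_{n},\,B_{n}+B_{n+1})$; any common divisor $d$ divides $tB_{n}$, and $d$ is not divisible by $t$ because $t\nmid B_{n}+B_{n+1}$ (exactly one of $B_{n},B_{n+1}$ has constant term $0$, so their sum has constant term $1$); hence $d\mid B_{n}$, then $d\mid B_{n+1}=(B_{n}+B_{n+1})-B_{n}$, so $d\mid\gcd(B_{n},B_{n+1})=1$. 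The case $m=2n+1$, where $(B_{m},B_{m+1})=(B_{n}+B_{n+1},\,tB_{n+1})$, is symmetric. Applying this with $m=\alpha_{2n}$ shows $B_{\alpha_{2n}}$ and $B_{\alpha_{2n}+1}$ have no common complex, hence no common real, zero. Together with the first paragraph this yields $B_{h_{n}}(t)>0$ for all real $t\ne 0$, and $B_{h_{n}}(0)=1>0$, so the conjecture follows.

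The positivity of $Q_{t}$ is a one-line discriminant computation, so the only genuine content is the coprimality of consecutive Stern polynomials; I expect this to be the main obstacle. The single delicate point there is the factor $t$ produced by the doubling rule $B_{2n}(t)=tB_{n}(t)$, and it is handled by the elementary remark that $t$ never divides $B_{n}(t)+B_{n+1}(t)$. One can alternatively reach the needed coprimality directly through the identity $B_{\alpha_{2(n+1)}}(t)=(t+1)B_{\alpha_{2n}}(t)+B_{\alpha_{2n}+1}(t)$, but the two approaches amount to the same thing.
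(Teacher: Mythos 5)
This statement is left as an open conjecture in the paper -- there is no proof of it anywhere in the text -- so there is nothing to compare your argument against; what you have written is, as far as I can check, a complete and correct proof that settles the conjecture. The identity $B_{h_{n}}(t)=(t^{2}+t+1)B_{\alpha_{2n}}(t)^{2}+(t+2)B_{\alpha_{2n}}(t)B_{\alpha_{2n}+1}(t)+B_{\alpha_{2n}+1}(t)^{2}$ is indeed established in the course of the paper's proof of Theorem~\ref{30case}, and it checks out numerically: for $n=1$ it gives $3t^{2}+3t+1=B_{19}(t)$ and for $n=2$ it gives $7t^{4}+17t^{3}+17t^{2}+7t+1=B_{331}(t)$ (note the paper's displayed $W_{2}(t)$ has a typo in the $t^{2}$ coefficient; Table 5 has the correct value). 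Your discriminant computation $(t+2)^{2}-4(t^{2}+t+1)=-3t^{2}$ is right, so the form is positive definite for every real $t\neq 0$, and the case $t\geq 0$ (or just $t=0$) is covered by the constant term being $1$. The only remaining ingredient, $\gcd(B_{m},B_{m+1})=1$, is correctly proved by your induction; the one point to polish is that in the step where $d\mid tB_{n}$ you should take $d$ to be an \emph{irreducible} common factor (or argue that $\gcd(d,t)=1$) before concluding $d\mid B_{n}$ -- as written, "$d$ is not divisible by $t$" does not by itself give $d\mid B_{n}$ for an arbitrary common divisor. The parity-of-constant-term observation that $t\nmid B_{n}+B_{n+1}$ is correct. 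Since the paper proves nothing here, your argument is strictly more than the paper offers, and it would be worth writing it up as a theorem rather than leaving the statement as a conjecture.
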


\noindent {\bf Acknowledgments.}
The author expresses his gratitude to Maciej Gawron for his kind help in computations of examples corresponding to $m=3$ and $n>2^{30}$ presented in Section \ref{sec3} and to the referee for a careful reading of the manuscript and valuable suggestions, which improved the quality of the paper.

\vskip 1cm
\noindent Maciej Ulas, Institute of Mathematics of the Polish Academy of Sciences, \'{S}wi\c{e}tego Tomasza 30, 31-014 Krak\'{o}w, Poland; email: {\tt maciej.ulas@uj.edu.pl}
\vskip 0.5cm
and
\vskip 0.5cm
\noindent Jagiellonian University, Faculty of Mathematics and Computer Science, Institute of
Mathematics, {\L}ojasiewicza 6, 30-348 Krak\'ow, Poland.

\end{document}